\UseRawInputEncoding
\documentclass[11pt]{article}
\usepackage{amsmath}
\usepackage{amssymb, amscd, amsthm,amsfonts}
\usepackage[all]{xy}
\usepackage[dvips]{graphicx}
\usepackage{verbatim}
\usepackage[perpage,symbol*]{footmisc}
\newtheorem{theorem}{Theorem}
\newtheorem{lemma}{Lemma}

\newtheorem{corollary}{Corollary}
\newtheorem{conjecture}{Conjecture}

\begin{document}
	
	\pagestyle{myheadings}

	\title{\bf On the girth cycles of the bipartite graph $D(k,q)$ \footnote{This work was supported by the National Natural Science Foundation of China (No. 61977056).}}
	\author{Ming Xu, Xiaoyan Cheng and Yuansheng Tang\footnote{Corresponding author.}\\
{\it\small School of Mathematical Sciences, Yangzhou University, Jiangsu, China\footnote{Email addresses: mx120170247@yzu.edu.cn(M. Xu), xycheng@yzu.edu.cn(X. Cheng), ystang@yzu.edu.cn(Y. Tang)}}
           }
	\date{}
	\maketitle

    {\noindent\small{\bf Abstract:}
     For integer $k\geq2$ and prime power $q$, the algebraic bipartite graph $D(k,q)$ proposed by Lazebnik and Ustimenko (1995) is meaningful not only in extremal graph theory but also in coding theory and cryptography. This graph is $q$-regular, edge-transitive and of girth at least $k+4$.
     For its exact girth $g=g(D(k,q))$,
     F\"{u}redi et al. (1995) conjectured $g=k+5$ for odd $k$ and $q\geq4$.
     This conjecture was shown to be valid in 2016 when $(k+5)/2$ is the product of an arbitrary factor of $q-1$ and an arbitrary power of the characteristic of $\mathbb{F}_q$.
     In this paper, we determine all the girth cycles of $D(k,q)$ for $3\leq k\leq 5$, $q>3$, and those for $3\leq k\leq8$, $q=3$.
        }

    \vspace{1ex}
    {\noindent\small{\bf Keywords:}
    	Bipartite graph; backtrackless walk; Cycle; Girth; Edge-transitive;}

\section{Introduction}
   The graphs considered in this paper are undirected, without loops and multiple edges.
   The vertex set and edge set of a graph $G$ are denoted by  $V(G)$ and $E(G)$, respectively.
   For distinct vertices $v,v'\in V(G)$ we write $v\sim_G v'$, or $v\sim v'$ for brevity, iff they are adjacent in $G$, that is, $\{v,v'\}\in E(G)$ is an edge of $G$.
   An automorphism of $G$ means a bijection $\phi$ from $V(G)$ to itself such that $\phi(v)\sim\phi(v')$ iff $v\sim v'$.
   If for any two edges $\{v_1,v_1'\}$, $\{v_2,v_2'\}$ of $G$ there is an automorphism $\phi$ of $G$ such that $\{\phi(v_1),\phi(v_1')\}=\{v_2,v_2'\}$, then $G$ is said to be {\it edge-transitive}.
   A {\it backtrackless} (or {\it non-recurrent}) walk of length $k$ is a sequence $v_1,v_2,\ldots,v_k$ in $V(G)$ such that $v_i\sim v_{i+1}$ for $i=1,2,\ldots,k-1$, and $v_j\neq v_{j+2}$ for $j=1,2,\ldots,k-2$.
   Furthermore, a backtrackless walk $v_1,v_2,\ldots,v_k$ is called a {\it backtrackless circuit} iff its length is greater than 2 and $v_3,v_4,\ldots,v_k,v_1,v_2$ is still a backtrackless walk.
   We note that a $k$-cycle is indeed a backtrackless circuit $v_1,v_2,\ldots,v_k$ consisting of distinct vertices. Clearly, any backtrackless circuit of length less than $2g(G)$ must be a cycle, where $g(G)$ is the girth of $G$, i.e. the length of the shortest cycles in $G$.

   In literature, graphs with large girth and a high degree of symmetry have been applied to variant problems in extremal graph theory, finite geometry, coding theory, cryptography, communication networks and quantum computations (c.f. \cite{Wenger91}--\cite{Dehghan20}). In particular, bipartite garphs are often used to represent systems in science and engineering, where the two sides of the bipartition represent variables and local constraints involving their adjacent variables, respectively. The cycle distribution of the corresponding graph plays an important role in those graphical representations. As an example, the Tanner graph of a low-density parity-check (LDPC) code can be represented by a bipartite graph, where the variable nodes represent code symbols and the constraint nodes represent the parity-check equations. The performance of iterative decoding algorithms depends highly on the cycle distribution and the girth of the Tanner graph. Due to this close relationship, it is of great significance to determine the girth cycles of bipartite graphs with large girth.

   In this paper, we concetrate on the girth cycles of the bipartite graph $D(k,q)$ which was originally proposed by Lazebnik and Ustimenko in \cite{Lazebnik95}, where $k$ is an integer not less than 2 and $q$ is a prime power. The graph $D(k,q)$ has been investigated quite well (e.g. \cite{Lazebnik95}--\cite{XWY16}), in particular it has been proved to be edge-transitive and of girth at least $k+4$ for $k\geq3$.
   On the application of $D(k,q)$ to coding theory and cryptography, we note that there were quite a few works devoted to constructing LDPC codes based on $D(k,q)$ (e.g. \cite{Kim04,Yan11,Polak13}).
   For the exact girth of $D(k,q)$, the following conjecture was proposed in \cite{Fredi95}:
   \begin{conjecture}\label{conmain}
     $D(k,q)$ has girth $k+5$ for all odd $k$ and all $q\geq4$.
   \end{conjecture}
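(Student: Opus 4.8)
The plan is to reduce the conjecture to a pure existence statement and then to construct the required cycle explicitly. Since $D(k,q)$ is bipartite, every cycle has even length; combined with the established bound $g(D(k,q))\ge k+4$ this already forces $g\ge k+5$ whenever $k$ is odd, because $k+4$ is then odd and hence cannot be a cycle length. Thus the lower bound is automatic, and the entire content of the conjecture is the upper bound $g\le k+5$, namely the existence of at least one $(k+5)$-cycle for every odd $k$ and every $q\ge4$. Moreover, since $k+5<2g$, the remark in the introduction guarantees that any backtrackless circuit of length $k+5$ is automatically a cycle, so it suffices to produce a single closed backtrackless walk of that length.

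Next I would set up the closure system. By edge-transitivity I fix one incident point--line pair as a base edge and normalize its coordinates. A closed walk $P^{(0)}\sim L^{(0)}\sim\cdots\sim P^{(m-1)}\sim L^{(m-1)}\sim P^{(0)}$ of length $2m=k+5$ is completely determined by the first coordinates of its vertices, because the defining relations of $D(k,q)$ express each higher coordinate of a vertex recursively through the first coordinates of the two endpoints of every edge. The backtrackless condition becomes that consecutive first coordinates differ, while closing the walk is equivalent to a system of polynomial equations over $\mathbb{F}_q$, one per coordinate level, obtained by telescoping the recursion around the loop: the lowest levels give bilinear relations and the higher levels give power-sum-type relations of growing degree.

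The principal attack is a symmetric, cyclic ansatz in which the free first coordinates run through $1,\zeta,\zeta^2,\ldots,\zeta^{m-1}$ for a primitive $m$-th root of unity $\zeta\in\mathbb{F}_q^{\times}$. Under this choice each closure equation should collapse to a power sum $\sum_{i=0}^{m-1}\zeta^{ij}$, which vanishes for $1\le j\le m-1$ by the classical root-of-unity identity, so the walk closes; and the $\zeta^i$ are pairwise distinct, which secures the backtrackless property. This works precisely when $m\mid q-1$, and an additive Frobenius variant handles $m$ equal to a power of $\mathrm{char}(\mathbb{F}_q)$; combining the two mechanisms then covers $m=d\,p^s$ with $d\mid q-1$, recovering the 2016 result. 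The hypothesis $q\ge4$ enters here as the room needed to keep increments nonzero and vertices distinct.

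The main obstacle is exactly those values of $m=(k+5)/2$ whose prime factorization involves primes that are neither $\mathrm{char}(\mathbb{F}_q)$ nor divisors of $q-1$; there the roots-of-unity ansatz is simply unavailable. For these I would drop the fully symmetric ansatz and study the closure conditions as an affine variety $V/\mathbb{F}_q$; a degrees-of-freedom count (about $2m-2$ free first coordinates against the $k=2m-5$ closure equations) suggests $\dim V$ is a small positive constant, so for large $q$ an $\mathbb{F}_q$-point off the degenerate locus (where some increment vanishes or two vertices coincide, a subvariety of strictly smaller dimension) should follow from a Lang--Weil or Weil-type estimate. The genuine difficulty is \emph{uniformity}: such bounds only guarantee a good point once $q$ exceeds a threshold growing with the degrees of the higher-level equations, hence growing with $k$, whereas the conjecture demands a cycle for every $q\ge4$ with $k$ unbounded. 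Bridging this gap---controlling the dimension, irreducibility, and smoothness of a non-degenerate component of $V$ uniformly in $k$---is the crux, and it is what has so far confined unconditional results to special $m$ or to small $k$, the latter being the regime this paper treats directly.
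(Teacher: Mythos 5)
The statement you are asked to prove is labelled a \emph{conjecture} in the paper, and the paper does not prove it; it remains open. What the paper actually does is verify the conjecture for $k=3$ and $k=5$ (all $q\ge 4$) by writing down the closure equations for an $8$- resp.\ $10$-cycle via the closed-form expression of Theorem~\ref{path} and solving them exhaustively, thereby determining \emph{all} girth cycles rather than exhibiting one. Your proposal should therefore be judged as an attempt at the general conjecture, and as such it has a genuine gap that you yourself name: the case where $m=(k+5)/2$ is not of the form $d\,p^s$ with $d\mid q-1$ and $p=\mathrm{char}(\mathbb{F}_q)$. Everything before that point is sound --- the lower bound $g\ge k+5$ does follow immediately from bipartiteness plus $g\ge k+4$ with $k$ odd, the reduction to a single backtrackless circuit of length $k+5<2g$ is exactly the remark from the introduction, and the root-of-unity/Frobenius constructions are an accurate description of how the known partial results of F\"uredi et al.\ and Cheng--Chen--Tang were obtained. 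But the final step is a research program, not an argument: a Lang--Weil count can only produce an $\mathbb{F}_q$-point on the non-degenerate locus once $q$ exceeds a threshold depending on the degrees of the closure equations, which grow with $k$, whereas the conjecture requires every $q\ge4$ with $k$ unbounded. Moreover the inputs to such a count --- that the relevant variety has a geometrically irreducible component of positive dimension defined over $\mathbb{F}_q$ and not contained in the degenerate locus, uniformly in $k$ --- are asserted only heuristically from a parameter count. So the proposal establishes nothing beyond the previously known special cases and does not constitute a proof of the conjecture.

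For contrast with the paper's method: where the paper does make progress ($k\le 5$ for $q>3$, $k\le 8$ for $q=3$), it works in the isomorphic model $\Lambda_{k,q}$, parametrizes walks by the color increments $u_i,v_i$, and reduces the closure conditions to a small explicit polynomial system (\ref{sumv1-5eq0})--(\ref{5l_5^6}) that it solves completely by hand, case by case. This yields a full classification of girth cycles (Theorems~\ref{lamda34}, \ref{cyclein5q}, \ref{lambda-k3}) but is intrinsically limited to small $k$, since the number and degree of the equations grow with $k$. Your cyclic ansatz, when it applies, scales to arbitrary $k$ but produces only one family of cycles and only for special $m$; neither approach currently reaches the full conjecture.
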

   \noindent
   This conjecture was shown to be valid in \cite{Fredi95} for the case that $(k+5)/2$ divides $q-1$ and in \cite{XWY14} for the case that $(k+5)/2$ is a power of the characteristic of $\mathbb{F}_q$, respectively. The latest result on this conjecture was given in \cite{XWY16}, therein Conjecture \ref{conmain} was shown to be valid for the case that $(k+5)/2$ is the product of a factor of $q-1$ and a power of the characteristic of $\mathbb{F}_q$. To our knowledge, almost all of the known researches on this conjecture are constructive, namely the main conclusions were shown by construction of some girth cycles of the corresponding graphs.

   We will determine all the girth cycles of $D(k,q)$
   for a few small $k$'s in this paper. Instead of working with the original graph $D(k,q)$, we consider the bipartite graph $\Lambda_{k,q}$ proposed in \cite{XWY14}, which is isomorphic to $D(k,q)$ and defined as follows. Let $L_k$ be the set of $(k+1)$-dimensional vectors $(l_0,l_1,l_2,\ldots,l_k)$ over $\mathbb{F}_q$ with $l_1=l_2$. Let $R_k$ be the set of $(k+1)$-dimensional vectors $(r_0,r_1,r_2,\ldots,r_k)$ over $\mathbb{F}_q$ with $r_1=0$. The vectors in $L_k$ and $R_k$ are denoted by $[l]$ and $\langle r \rangle$, respectively. Then $\Lambda_{k,q}$ is the bipartite graph with vertex set $V(\Lambda_{k,q})=L_k\cup R_k$ and edge set $E(\Lambda_{k,q})\subset L_k\times R_k$ such that $[l]=(l_0,l_1,\ldots,l_k)\in L_k$ and $\langle r \rangle=(r_0,r_1,\ldots,r_k)\in R_k$ are adjacent in $\Lambda_{k,q}$ if and only if, for $2\leq i \leq k$,
   \begin{align}\label{adj_con}
      l_i+r_i=\begin{cases}
         r_0l_{i-2} \text{ ~~if } i\equiv2,3 \mod 4,\\
         l_0r_{i-2} \text{ ~~if } i\equiv0,1 \mod 4.
      \end{cases}
   \end{align}
Since $\Lambda_{k,q}$ and $D(k,q)$ are isomorphic graphs \cite{XWY14}, $\Lambda_{k,q}$ is also edge-transitive and of girth at least $k+4$ for $k\geq 3$.

   This paper is arranged as follows.
   In Section 2 we show a closed-form expression for the backtrackless walks of $\Lambda_{k,q}$ which are leading by the all-zero vectors $[l]=(0,0,\ldots)$ and $\langle r \rangle=(0,0,\ldots)$.
   By using this expression, all the girth cycles in $\Lambda_{3,q}$, and those in $\Lambda_{4,q}$ for $q>3$ are determined in Section 3.
   In Section 4, we present a necessary and sufficient condition for some backtrackless walks of length 10 to be circuits of $\Lambda_{5,q}$, and thus all the girth cycles of $\Lambda_{5,q}$ are determined.
   For $4\leq k\leq 8$, all the girth cycles of $\Lambda_{k,3}$ are determined in Section 5.
   Some concluding remarks are given in Section~6.

\section{Backtrackless Walks in $\Lambda_{k,q}$}
   Since $\Lambda_{k,q}$ is edge-transitive, without loss of generality one can deal with only the cycles which contains the edge $[l]=(0,0,\ldots,0)\sim\langle r\rangle=(0,0,\ldots,0)$.
   Let $\Gamma=[l^{(1)}]\langle r^{(1)}\rangle[l^{(2)}]\langle r^{(2)}\rangle\cdots$ be a given backtrackless walk of the bipartite graph $\Lambda_{k,q}$ leading by the all-zero vectors $[l^{(1)}]=(0,0,\ldots,0)$ and $\langle r^{(1)}\rangle=(0,0,\ldots,0)$. Let $x_i$ and $y_i$ denote the first entries (or colors) of $[l^{(i)}]$ and $\langle r^{(i)}\rangle$, respectively.
   For $i\geq1$, let
   \begin{align}\label{relationxy}
      u_i=x_{i+1}-x_{i}, ~~~~~~~v_i=y_{i+1}-y_{i}.
   \end{align}
   Clearly, we have $u_i\neq0$ and $v_i\neq0$.
   For $i\geq1$ and $j\geq0$, let $l_j^{(i)}$ and $r_j^{(i)}$ denote the $(j+1)$-th entries of $[l^{(i)}]$ and $\langle r^{(i)}\rangle$ respectively.

   At first, we introduce the notation $\rho_s(\omega_1,\ldots,\omega_n)$ proposed in \cite{XWY14} which is useful for expressing the vertices in the walk $\Gamma$ with their colors.
   For $\omega_1,\ldots,\omega_n\in\mathbb{F}_q^{\ast}$, let
   $$ \rho_0(\omega_1,\ldots,\omega_n)=\omega_1\cdots\omega_n$$
   and, for $1 \leq s\leq \lfloor\frac{n}{2}\rfloor$, let
   $$ \rho_s(\omega_1,\ldots,\omega_n)=\sum_{1\leq i_1<\cdots<i_s\leq n-s}\frac{\prod_{j=1}^n\omega_j}{\prod_{j=1}^s\omega_{i_j+j-1}\omega_{i_j+j}},$$
   where each term in the summation is a product of the remaining elements in the sequence $\omega_1,\ldots,\omega_n$ after deleting $s$ disjoint pairs $\{\omega_i,\omega_{i+1}\}$ of consecutive elements.
   If $n<2s$ or $s<0$, $\rho_s(\omega_1,\ldots,\omega_n)$ is defined as 0. For the null sequence $\eta$, $\rho_s(\eta)$ is defined as
   \begin{align*}
      \rho_s(\eta)=\begin{cases}
         1 \text{ ~~if } s=0,\\
         0 \text{ ~~if } s\neq 0.
      \end{cases}
   \end{align*}
   From the definition of $\rho_s(\omega_1,\ldots,\omega_n)$, one can show easily
   \begin{align}\label{rhoproperty}
      \rho_s(\omega_1,\ldots,\omega_n)=\rho_{s-1}(\omega_1,\ldots,\omega_{n-2})+\omega_n\rho_s(\omega_1,\ldots,\omega_{n-1}),
   \end{align}
   and, for $0\leq j\leq n$,
   \begin{align}
      \rho_{n-j}(\omega_1,\ldots,\omega_{2n})=\sum_{0\leq s_1<t_1\leq s_2<t_2\leq \cdots \leq s_j<t_j\leq n}\prod_{k=1}^j\omega_{2s_k+1}\omega_{2t_k},\label{0b0}
   \end{align}
   \begin{align}
      \rho_{n-j}(\omega_1,\ldots,\omega_{2n+1})=\sum_{0\leq s_0<t_1\leq s_1<t_2\leq \cdots \leq s_{j-1}<t_j\leq s_j\leq n}\omega_{2s_0+1}\prod_{k=1}^j\omega_{2t_k}\omega_{2s_k+1}.\label{0b1}
   \end{align}

By using of the notation $\rho_s(\omega_1,\ldots,\omega_n)$, a closed-form expression for the backtrackless walks leading by the all-zero vector $[l^{(1)}]=(0,0,\ldots,0)$ was given in \cite{XWY14}.
Since the second vertex in the walk $\Gamma$ is also the all-zero vector $\langle r^{(1)}\rangle=(0,0,\ldots,0)$, we improve the closed-form expression further in the following theorem.
   \begin{theorem}\label{path}
       For any $i\geq 1$ and $j\geq  0$, we have
       \begin{align}
          l_{4j}^{(i+1)}&=\rho_{i-j-1}(u_1,v_1,\ldots,u_{i-1},v_{i-1},u_i),\label{4j}\\
          l_{4j+1}^{(i+1)}&=\rho_{i-j-2}(v_1,u_2,\ldots,v_{i-1},u_i),\label{4j+1}\\
          l_{4j+2}^{(i+1)}&=y_{i+1}l_{4j}^{(i+1)}-\rho_{i-j-1}(u_1,v_1,\ldots,u_i,v_i),\label{4j+2}\\
          l_{4j+3}^{(i+1)}&=y_{i+1}l_{4j+1}^{(i+1)}-\rho_{i-j-2}(v_1,u_2\ldots,v_{i-1},u_i,v_i).\label{4j+3}
       \end{align}
   \end{theorem}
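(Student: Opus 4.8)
The plan is to prove all four identities simultaneously by induction on $i$, reducing every inductive step to the single recursion \eqref{rhoproperty} for $\rho_s$. The starting point is to turn the adjacency condition \eqref{adj_con} into difference equations relating consecutive vertices of $\Gamma$. Applying \eqref{adj_con} to the two edges incident to $\langle r^{(i)}\rangle$, namely $[l^{(i)}]\sim\langle r^{(i)}\rangle$ and $\langle r^{(i)}\rangle\sim[l^{(i+1)}]$, and subtracting them (matched by the residue of the position $m$ modulo $4$) so that the common entry $r_m^{(i)}$ cancels, one obtains
\begin{align*}
   l_m^{(i+1)}-l_m^{(i)}&=y_i\bigl(l_{m-2}^{(i+1)}-l_{m-2}^{(i)}\bigr)\quad\text{for }m\equiv2,3\bmod4,\\
   l_m^{(i+1)}-l_m^{(i)}&=u_i\,r_{m-2}^{(i)}\quad\text{for }m\equiv0,1\bmod4,
\end{align*}
using $r_0^{(i)}=y_i$, $l_0^{(i+1)}=x_{i+1}$ and $u_i=x_{i+1}-x_i$. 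The edge $[l^{(i)}]\sim\langle r^{(i)}\rangle$ alone moreover gives $r_m^{(i)}=y_i\,l_{m-2}^{(i)}-l_m^{(i)}$ for $m\equiv2,3\bmod4$, which lets me eliminate the $r$-entry appearing in the second difference equation in favour of the $l$-entries of $[l^{(i)}]$.

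With these relations in hand I would fix $i$, assume \eqref{4j}--\eqref{4j+3} for $[l^{(i)}]$, and establish them for $[l^{(i+1)}]$ by an inner induction on the position $m$, increasing $m$ by $2$ each time so that the even positions and the odd positions form two separate chains. For $m\equiv2,3$ I substitute the inductive formulas into the first difference equation; after writing $y_{i+1}-y_i=v_i$ and using the value $l_{m-2}^{(i+1)}$ already settled at the previous step, the claim collapses to exactly one instance of \eqref{rhoproperty}, which produces \eqref{4j+2} and \eqref{4j+3}. For $m\equiv0,1$ I first rewrite $r_{m-2}^{(i)}$ by the relation above (it simplifies to a single $\rho_s$ evaluated on a truncation of $u_1,v_1,u_2,v_2,\dots$), insert it into the second difference equation, and again recognise the outcome as one application of \eqref{rhoproperty}; this yields \eqref{4j} and \eqref{4j+1}. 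The bookkeeping is the real crux here: one must track how deleting or appending a factor $u_i$ or $v_i$ simultaneously shifts the order $s$ of $\rho_s$ and the length of its argument so that the two sides of each difference equation align with \eqref{rhoproperty} on the nose.

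It remains to dispose of the base cases. For $i=1$ the hypotheses $[l^{(1)}]=\langle r^{(1)}\rangle=(0,\dots,0)$ force $y_1=0$ and $r_m^{(1)}=0$, so \eqref{adj_con} immediately gives $[l^{(2)}]=(u_1,0,\dots,0)$, which matches \eqref{4j}--\eqref{4j+3} at $i=1$ once the conventions on $\rho_s$ of short and null sequences are invoked. Within the inner induction the two genuinely special positions are $m=0$ and $m=1$, since these are initial data rather than outputs of \eqref{adj_con}: position $0$ carries the color, so \eqref{4j} at $j=0$ amounts to $\rho_{i-1}(u_1,v_1,\dots,u_i)=x_{i+1}=u_1+\cdots+u_i$, while position $1$ is pinned by the defining constraint $l_1=l_2$ of $L_k$, so \eqref{4j+1} at $j=0$ has to be checked against \eqref{4j+2} at $j=0$. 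Both reduce to the explicit expansions \eqref{0b0}--\eqref{0b1}: expanding $\rho_{i-1}(u_1,v_1,\dots,u_i)$ by \eqref{0b1} leaves precisely the sum $u_1+\cdots+u_i$, and the $l_1=l_2$ check becomes $\rho_{i-1}(u_1,v_1,\dots,u_i,v_i)+\rho_{i-2}(v_1,u_2,\dots,v_{i-1},u_i)=x_{i+1}y_{i+1}$, which \eqref{0b0} turns into $\sum_{a\le t}u_av_t+\sum_{b<c}u_cv_b=\bigl(\sum_a u_a\bigr)\bigl(\sum_t v_t\bigr)$.

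I expect the only real obstacle to be exactly this alignment of indices in the inner induction, together with the two boundary identities above; once they are in place, every inductive step is a direct appeal to \eqref{rhoproperty}, and the four formulas \eqref{4j}--\eqref{4j+3} follow. I also note that this argument refines the closed-form expression of \cite{XWY14} by specialising it to the case $\langle r^{(1)}\rangle=(0,\dots,0)$, the substitution $y_1=0$, $r_m^{(1)}=0$ being what removes the extra terms and leaves the compact form stated in the theorem.
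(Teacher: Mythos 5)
Your proposal is correct and follows essentially the same route as the paper's proof: induction on $i$, turning the adjacency condition \eqref{adj_con} into the difference relations $l_m^{(i+1)}-l_m^{(i)}=u_i r_{m-2}^{(i)}$ (for $m\equiv0,1$) and $l_m^{(i+1)}-l_m^{(i)}=y_i\bigl(l_{m-2}^{(i+1)}-l_{m-2}^{(i)}\bigr)$ (for $m\equiv2,3$), eliminating the $r$-entries via the inductive hypothesis, and closing each step with one application of \eqref{rhoproperty}. Your two boundary checks at positions $0$ and $1$ (including the identity $\rho_{i-1}(u_1,v_1,\ldots,u_i,v_i)+\rho_{i-2}(v_1,u_2,\ldots,v_{i-1},u_i)=x_{i+1}y_{i+1}$) are exactly the special cases the paper verifies for $j=0$.
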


   \begin{proof}
      Since $[l^{(2)}]=(x_2,0,\ldots)=(u_1,0,\ldots)$, one can check easily that (\ref{4j}--\ref{4j+3}) are valid when $i=1$. Assume (\ref{4j}--\ref{4j+3}) are valid when $i=t$ for some $t\geq 1$. For $j\geq0$, from $[l^{(t+1)}]\sim\langle r^{(t+1)}\rangle$ we see
      \begin{align*}
         r_{4j+2}^{(t+1)}&=r_{0}^{(t+1)}l_{4j}^{(t+1)}-l_{4j+2}^{(t+1)}=\rho_{t-j-1}(u_1,v_1,\ldots,u_t,v_t),\\ r_{4j+3}^{(t+1)}&=r_{0}^{(t+1)}l_{4j+1}^{(t+1)}-l_{4j+3}^{(t+1)}=\rho_{t-j-2}(v_1,u_2,\ldots,v_{t-1},u_t,v_t).
      \end{align*}
      Furthermore, for $j\geq 1$ from $[l^{(t+1)}]\sim\langle r^{(t+1)}\rangle\sim[l^{(t+2)}]$ and (\ref{rhoproperty}) we have
      \begin{align}
      	 l_{4j}^{(t+2)}&=l_{0}^{(t+2)}r_{4j-2}^{(t+1)}-r_{4j}^{(t+1)}\nonumber\\
      	              &=(l_{0}^{(t+2)}-l_{0}^{(t+1)})r_{4j-2}^{(t+1)}+l_{4j}^{(t+1)}\nonumber\\
      	              &=u_{t+1}\rho_{t-j}(u_1,v_1,\ldots,u_t,v_t)+\rho_{t-j-1}(u_1,v_1,\ldots,u_{t-1},v_{t-1},u_t)\nonumber\\
      	              &=\rho_{t-j}(u_1,v_1,\ldots,u_t,v_t,u_{t+1}),\label{induction4jt+2}\\
      	 l_{4j+1}^{(t+2)}&=l_{0}^{(t+2)}r_{4j-1}^{(t+1)}-r_{4j+1}^{(t+1)}\nonumber\\
      	 &=(l_{0}^{(t+2)}-l_{0}^{(t+1)})r_{4j-1}^{(t+1)}+l_{4j+1}^{(t+1)}\nonumber\\
      	 &=u_{t+1}\rho_{t-j-1}(v_1,u_2,\ldots,v_{t-1},u_t,v_t)+\rho_{t-j-2}(v_1,u_2,\ldots,v_{t-1},u_t)\nonumber\\
      	 &=\rho_{t-j-1}(v_1,u_2,\ldots,v_t,u_{t+1}).\label{induction4j+1t+2}
      \end{align}
      From $l_0^{(t+2)}=x_{t+2}=u_1+\cdots+u_{t+1}=\rho_t(u_1,v_1,\ldots,u_t,v_t,u_{t+1})$ and
      \begin{align*}
      	l_1^{(t+2)}=l_2^{(t+2)}&=r_{0}^{(t+1)}l_0^{(t+2)}-r_2^{(t+1)}\\
      	&=(l_{0}^{(t+2)}-l_{0}^{(t+1)})r_0^{(t+1)}+l_2^{(t+1)}\\
      	&=u_{t+1}y_{t+1}+\rho_{t-2}(v_1,u_2,\ldots,v_{t-1},u_t)\\
      	&=u_{t+1}\sum_{1\leq s\leq t}v_s+\sum_{1\leq s_1<s_2\leq t}v_{s_1}u_{s_2}=\rho_{t-1}(v_1,u_2,\ldots,v_t,u_{t+1})
      \end{align*}
      we see (\ref{induction4jt+2}) and (\ref{induction4j+1t+2}) are also valid for $j=0$. Hence, for $j\geq 0$ we have
      \begin{align*}
         l_{4j+2}^{(t+2)}&=r_{0}^{(t+1)}l_{4j}^{(t+2)}-r_{4j+2}^{(t+1)}\\
         &=y_{t+1}\rho_{t-j}(u_1,v_1,\ldots,u_t,v_t,u_{t+1})-\rho_{t-j-1}(u_1,v_1,\ldots,u_{t},v_{t})\\
         &=y_{t+2}l_{4j}^{(t+2)}-\rho_{t-j}(u_1,v_1,\ldots,u_{t+1},v_{t+1}),\\
         l_{4j+3}^{(t+2)}&=r_{0}^{(t+1)}l_{4j+1}^{(t+2)}-r_{4j+3}^{(t+1)}\\
         &=y_{t+1}\rho_{t-j-1}(v_1,u_2,\ldots,v_t,u_{t+1})-\rho_{t-j-2}(v_1,u_2,\ldots,v_{t-1},u_{t},v_{t})\\
         &=y_{t+2}l_{4j+1}^{(t+2)}-\rho_{t-j-1}(v_1,u_2,\ldots,u_{t+1},v_{t+1}).
      \end{align*}
      Therefore, according to induction, we see (\ref{4j}--\ref{4j+3}) are valid for any $i\geq 1$.
   \end{proof}

   The walk $\Gamma$ will be called of type $(u_1,v_1,u_2,v_2,\ldots)$. If the first $2i$ vertices in the walk $\Gamma$ form a circuit in $\Lambda_{k,q}$ of length $2i$, we also say it is a backtrackless circuit of type $(u_1,v_1,\ldots, u_i,v_i)$. In the next sections, we will deduce some conditions for the first vertices in $\Gamma$ forming a circuit in $\Lambda_{k,q}$ for some small $k$'s, and then determine all the girth cycles in these graphs.

\section{Girth Cycles of $\Lambda_{3,q}$ and $\Lambda_{4,q}$}

   Since the graph $\Lambda_{k,q}$ has girth at least $k+4$,
   we see that any backtrackless circuit of length 8 in $\Lambda_{k,q}$ is a cycle, namely consisting of distinct vertices.
   According to Theorem~\ref{path}, the first vertices of $\Gamma$ form a cycle of type $\epsilon=(u_1,v_1,\ldots,u_4,v_4)$ in $\Lambda_{3,q}$ if and only if $v_1,v_2,v_3,u_2,u_3,u_4\in\mathbb{F}_q^{\ast}$ satisfy $v_4=-v_1-v_2-v_3\neq 0$, $u_1=-u_2-u_3-u_4\neq 0$ and
   \begin{align}\label{lamda3q}
     \begin{cases}
       v_1u_2+(v_1+v_2)u_3+(v_1+v_2+v_3)u_4=0,\\
       v_1^2u_2+(v_1+v_2)^2u_3+(v_1+v_2+v_3)^2u_4=0.
     \end{cases}
   \end{align}
   Hence, the cycles of length 8 in $\Lambda_{3,q}$ can be determined simply by solving the linear system (\ref{lamda3q}).

   If $v_1+v_2=0$, then one should set $v_3=v_1$, $u_4=-u_2$, $v_4=-v_1$, and $u_1=-u_3$. Hence, for any $r,s,t\in\mathbb{F}_q^{\ast}$, let
   \begin{align}\label{3v_12eq0}
   \epsilon=(r,s,t,-s,-r,s,-t,-s),
   \end{align}
   then we get the following backtrackless walk of length 8 in $\Lambda_{4,q}$
   \begin{gather*}
      [0,0,0,0,0]\sim\langle0,0,0,0,0\rangle\sim[r,0,0,0,0]\sim\langle s,0,rs,0,r^2s\rangle\sim\\
      [r+t,st,st,s^2t,rst]\sim\langle0,0,-st,-s^2t,-st(2r+t)\rangle\sim\\
      [t,st,st,s^2t,2rst]\sim\langle s,0,0,0,-2rst\rangle,
   \end{gather*}
   which gives a cycle of length 8 in $\Lambda_{3,q}$, and a cycle of length 8 in $\Lambda_{4,q}$ if and only if the characteristic of $\mathbb{F}_q$ is 2.

   If $v_1+v_2\neq0$, then we should have $v_2+v_3\neq0$, $v_1+v_2+v_3\neq0$ and thus there is a $t\in\mathbb{F}_q^{\ast}$ such that
   \begin{align*}
      u_2&=t\begin{vmatrix}
          v_1+v_2 & v_1+v_2+v_3\\
          (v_1+v_2)^2 & (v_1+v_2+v_3)^2
          \end{vmatrix},\\
      u_3&=-t\begin{vmatrix}
          v_1 & v_1+v_2+v_3\\
          v_1^2 & (v_1+v_2+v_3)^2
          \end{vmatrix},\\
      u_4&=t\begin{vmatrix}
          v_1 & v_1+v_2\\
          v_1^2 & (v_1+v_2)^2
          \end{vmatrix}.
   \end{align*}
   Hence, for distinct $a,b,c\in\mathbb{F}_q^{\ast}$, let
   \begin{gather}
   v_1=a,v_2=b-a,v_3=c-b,v_4=-c,\label{03x0}\\
   u_2=tbc(c-b),u_3=tac(a-c),u_4=tab(b-a),\label{03x1}\\
   u_1=-(u_2+u_3+u_4)=t(c-b)(a-c)(b-a),\label{03x2}
   \end{gather}
   where $t\in\mathbb{F}_q^{\ast}$, then we get a backtrackless walk of length 8 in $\Lambda_{4,q}$ as the following
   \begin{footnotesize}
   \begin{gather*}
   [0,0,0,0,0]\sim\langle0,0,0,0,0\rangle\sim[t(c-b)(a-c)(b-a),0,0,0,0]\sim\\
   \langle a,0,ta(c-b)(a-c)(b-a),0,t^2a(c-b)^2(a-c)^2(b-a)^2\rangle\sim\\
   [ta(c-b)(b+c-a),tabc(c-b),tabc(c-b),ta^2bc(c-b),t^2abc(c-b)^2(a-c)(b-a)]\sim\\
   \langle b,0,tab(c-b)(b-a),tabc(c-b)(b-a),t^2ab(c-b)^2(b-a)(ab-a^2+c^2)\rangle\sim\\
   [tab(a-b),tabc(a-b),tabc(a-b),tabc^2(a-b),t^2abc(c-b)(b-a)(a-c)(a-b+c)]\sim\\
   \langle c,0,0,0,t^2abc(c-b)(b-a)(c-a)(a-b+c)\rangle,
   \end{gather*}
   \end{footnotesize}
\normalsize\noindent
which gives a cycle of length 8 in $\Lambda_{3,q}$, and a cycle of length 8 in $\Lambda_{4,q}$ if and only if $a+c=b$.

   Thus, according to the symmetry of $\Lambda_{k,q}$, we have determined all of the girth cycles in  $\Lambda_{3,q}$, and those in $\Lambda_{4,q}$ if $q> 3$.

   \begin{theorem}\label{lamda34}
   Let $\epsilon=(u_1,v_1,\ldots,u_4,v_4)$ be a tuple over $\mathbb{F}_q^*$.
      \begin{description}
         \item[(a)] In $\Lambda_{3,q}$ there is a cycle of type $\epsilon$ with $v_1+v_2=0$ if and only if (\ref{3v_12eq0}) is valid for some $r,s,t\in\mathbb{F}_q^*$.
         \item[(b)] In $\Lambda_{4,q}$ there is a cycle of type $\epsilon$ with $v_1+v_2=0$ if and only if the characteristic of $\mathbb{F}_q$ is 2 and (\ref{3v_12eq0}) is valid for some $r,s,t\in\mathbb{F}_q^*$.
         \item[(c)] In $\Lambda_{3,q}$ there is a cycle of type $\epsilon$ with $v_1+v_2\neq 0$ if and only if (\ref{03x0}--\ref{03x2}) are valid for some $t\in\mathbb{F}_q^*$ and distinct $a,b,c\in\mathbb{F}_q^*$.
         \item[(d)] In $\Lambda_{4,q}$ there is a cycle of type $\epsilon$ with $v_1+v_2\neq 0$ if and only if (\ref{03x0}--\ref{03x2}) are valid for some $t\in\mathbb{F}_q^*$ and distinct $a,b,c\in\mathbb{F}_q^*$ with $a+c=b$.

         \item [(e)] For $q>3$, $g(\Lambda_{3,q})=g(\Lambda_{4,q})=8$.
         \item[(f)] $g(\Lambda_{3,3})=8$, $g(\Lambda_{4,3})\geq 10$.
         \item[(g)] For $q\geq 3$, $g(\Lambda_{5,q})\geq 10$.
      \end{description}
   \end{theorem}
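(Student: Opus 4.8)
The plan is to obtain parts (a)--(d) directly from the computation carried out just before the statement, and then to dispatch the girth claims (e)--(g) by elementary existence and counting arguments. Because $g(\Lambda_{k,q})\geq k+4\geq 7$ for $k\in\{3,4\}$ and the graph is bipartite, its girth is even and at least $8$, so every backtrackless circuit of length $8$ consists of distinct vertices; hence for (a) and (c) it suffices to characterize when the leading eight vertices of a walk $\Gamma$ of type $(u_1,v_1,\dots,u_4,v_4)$ close up, i.e. when $[l^{(5)}]=[l^{(1)}]=0$. By Theorem~\ref{path} closure gives $l_0^{(5)}=\sum u_i=0$ together with $l_1^{(5)}=0$ and $l_3^{(5)}=0$ (the coordinate $l_2^{(5)}=l_1^{(5)}$ being forced by membership in $L_k$); using the companion closure $\sum v_i=0$, these two reduce exactly to the system (\ref{lamda3q}) already set up in the text. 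Viewing (\ref{lamda3q}) as a homogeneous linear system in $(u_2,u_3,u_4)$ whose coefficient matrix is built from the partial sums $v_1,\,v_1+v_2,\,v_1+v_2+v_3$, I would split on the rank of that matrix: the rank drops precisely when $v_1+v_2=0$, which (to keep $u_2,u_4\neq0$) then forces $v_3=v_1$ and the one-parameter family (\ref{3v_12eq0}); otherwise the $2\times2$ minors are nonzero and Cramer's rule yields (\ref{03x0}--\ref{03x2}). The two displayed walks realize these families, giving the ``if'' direction, so (a) and (c) follow.

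For (b) and (d) I would note that a cycle of $\Lambda_{3,q}$ lifts to $\Lambda_{4,q}$ exactly when the single extra coordinate also closes, i.e. $l_4^{(5)}=0$, equivalently $r_4^{(4)}=0$. This coordinate is exhibited in the two displayed walks: it is a nonzero multiple of $2rst$ in case (\ref{3v_12eq0}) and equals $t^2abc(c-b)(b-a)(c-a)(a-b+c)$ in case (\ref{03x0}--\ref{03x2}). Since $r,s,t,a,b,c\in\mathbb{F}_q^*$ and $a,b,c$ are distinct, these vanish if and only if $\operatorname{char}\mathbb{F}_q=2$, respectively $a+c=b$, which is precisely (b) and (d).

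For the girth statements I would again use that $\Lambda_{k,q}$ is bipartite of girth at least $k+4$, so its girth is the least even length realized. Part (g) is then immediate: for $k=5$ the girth is at least $9$ and even, hence at least $10$. For (e) and (f) I would test whether the parameters demanded by (a)--(d) exist over the given field. An $8$-cycle in $\Lambda_{3,q}$ always exists, e.g. by taking $r=s=t=1$ in (\ref{3v_12eq0}), so $g(\Lambda_{3,q})=8$ for every $q$, covering the $\Lambda_3$ parts of (e) and (f). For $\Lambda_{4,q}$ with $q>3$ one can choose $a,c\in\mathbb{F}_q^*$ with $a\neq c$ and $a+c\neq0$ (possible since $q>3$) and set $b=a+c$ in (\ref{03x0}--\ref{03x2}), producing an $8$-cycle and hence $g(\Lambda_{4,q})=8$. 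Over $\mathbb{F}_3$, however, $\mathbb{F}_3^*=\{1,2\}$ has only two elements, so no three distinct $a,b,c\in\mathbb{F}_3^*$ exist for (d), while $\operatorname{char}\mathbb{F}_3\neq2$ rules out (b); thus $\Lambda_{4,3}$ has no $8$-cycle and $g(\Lambda_{4,3})\geq10$.

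The step I expect to be most delicate is the ``only if'' direction of (a)--(d). One must verify that the case split $v_1+v_2=0$ versus $v_1+v_2\neq0$ is exhaustive, that the degenerate case genuinely forces $v_3=v_1$ rather than admitting spurious solutions, and that all the $\mathbb{F}_q^*$-nonvanishing and distinctness side-conditions are propagated correctly through the solution of (\ref{lamda3q}) and through the extra coordinate $r_4^{(4)}$, so that the two parametrizations capture every girth cycle and nothing more. By contrast, once (a)--(d) are in hand, parts (e)--(g) are routine.
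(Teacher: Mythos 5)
Your proposal is correct and, for parts (a)--(f), follows essentially the same route as the paper: reduce closure of the length-8 walk to the linear system (\ref{lamda3q}) via Theorem~\ref{path}, split on whether $v_1+v_2=0$ (noting, as you do, that in the degenerate case the nonvanishing of $u_2,u_4$ is what forces $v_3=v_1$, and in the nondegenerate case the requirement $u_3\neq0$ is what forces $v_2+v_3\neq0$ so that Cramer's rule gives (\ref{03x0}--\ref{03x2})), and then test the single extra coordinate $l_4^{(5)}$ for the passage from $\Lambda_{3,q}$ to $\Lambda_{4,q}$; your existence and parameter-counting checks for (e) and (f) match the paper's. The only divergence is (g): you deduce $g(\Lambda_{5,q})\geq 10$ from the known bound $g\geq k+4=9$ together with bipartiteness, which is exactly the shortcut the paper acknowledges as a corollary of the bound in the literature, whereas the paper instead gives a self-contained argument by computing $l_5^{(5)}=\rho_1(v_1,u_2,v_2,u_3,v_3,u_4)\neq 0$ for every tuple permitted by (a) and (d). Both are sound; the paper's version buys independence from the external girth bound, while yours is shorter.
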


  \begin{proof}
  	 The first six results follow immediately from the argument preceding this theorem.
  	
  For the last result, though it is a corollary of the known bound given in \cite{Lazebnik95}, here we give a simple proof for it by using the results~(a) and (d).
  We assume there is a cycle of type $\epsilon$ in $\Lambda_{5,q}$. If $v_1+v_2=0$, according to (a) we see (\ref{3v_12eq0}) is vaid for some $r,s,t\in\mathbb{F}_q^*$ and then we have
  $$l_5^{(5)}=\rho_1(v_1,u_2,v_2,u_3,v_3,u_4)=rts^2\neq 0,$$
  contradicts the assumption. If $v_1+v_2\neq0$, according to (d) there are $a,b,t\in\mathbb{F}_q^{\ast}$ with $(a-b)(2a-b)\neq0$ such that
  \begin{align*}
  (v_1,v_2,v_3,v_4)&=(a,b-a,-a,a-b),\\
  (u_1,u_2,u_3,u_4)&=(-t(2a-b),-tb,t(2a-b),tb),
  \end{align*}
  and then we have
  	 $$l_5^{(5)}=\rho_1(v_1,u_2,v_2,u_3,v_3,u_4)=t^2ab(a-b)(2a-b)\neq 0,$$
  contradicts the assumption. Thus, there is no cycle of length 8 in $\Lambda_{5,q}$ and then we have
  $g(\Lambda_{5,q})\geq 10$.
  \end{proof}

\section{Girth Cycles of $\Lambda_{5,q}$}
  According to (\ref{0b0}), (\ref{0b1}) and Theorem~\ref{path}, we see that the first vertices of the walk $\Gamma$ form a cycle of type $(u_1,v_1,\ldots,u_5,v_5)$ in $\Lambda_{5,q}$ if and only if
 \begin{align}\label{sumv1-5eq0}
    y_6&=v_1+\cdots+v_5=0,\\
   x_6&=u_1+\cdots+u_5=0,\label{5l_0^6}
 \end{align}
and
 \begin{align}
l_1^{(6)}&=l_2^{(6)}=y_6l_0^{(6)}-\rho_4(u_1,v_1,\ldots,u_5,v_5)=\sum_{k=2}^5y_ku_k=0,\label{5l_1^6}\\
   l_3^{(6)}&=y_{6}l_{1}^{(6)}-\rho_3(v_1,u_2,\ldots,v_4,u_5,v_5)=\sum_{k=2}^5y_k^2u_k=0,\label{5l_3^6}\\
   l_4^{(6)}&=\rho_3(u_1,v_1,\ldots,u_4,v_4,u_5)=-\sum_{k=1}^4x_{k+1}^2v_k=0,\label{5l_4^6}\\
   l_5^{(6)}&=\rho_2(v_1,u_2,\ldots,v_4,u_5)=-\sum_{2\leq r\leq s\leq4}y_ru_rv_sx_{s+1}=0.\label{5l_5^6}
 \end{align}

 To determine all of the girth cycles in $\Lambda_{5,q}$, we assume now that $\epsilon=(u_1,v_1,\ldots,u_5,v_5)$ is a tuple over $\mathbb{F}_q^*$ satisfying (\ref{sumv1-5eq0}--\ref{5l_5^6}).

 From (\ref{5l_1^6}), (\ref{5l_3^6}) and (\ref{5l_5^6}) we have
 \begin{align*}
    &\sum_{k=2}^4y_ku_k(y_5-y_k)=0,\\
    &\sum_{k=2}^4y_ku_k\sum_{j=k}^4v_jx_{j+1}=0,
 \end{align*}
 and thus there is a $t\in \mathbb{F}_q^{\ast}$ such that
 \begin{align*}
    y_2u_2&=t\begin{vmatrix}
          y_5-y_3 & y_5-y_4\\
          v_3x_4+v_4x_5 & v_4x_5
          \end{vmatrix}=tv_3v_4u_4,\\
    y_3u_3&=-t\begin{vmatrix}
          y_5-y_2 & y_5-y_4\\
          v_2x_3+v_3x_4+v_4x_5 & v_4x_5
          \end{vmatrix}=-tv_4(v_2(u_3+u_4)+v_3u_4),\\
    y_4u_4&=t\begin{vmatrix}
          y_5-y_2 & y_5-y_3\\
          v_2x_3+v_3x_4+v_4x_5 & v_3x_4+v_4x_5
          \end{vmatrix}=tv_2(v_3u_3+v_4(u_3+u_4)),
 \end{align*}
 which can also be rewritten as
  \begin{gather}
    v_1u_2=tv_3v_4u_4,\label{y2u2}\\
    (v_{1,2}+tv_2v_4)u_3=-tv_4v_{2,3}u_4,\label{y3u3}\\
    tv_2v_{3,4}u_3=-(v_{4,5}+tv_2v_4)u_4,\label{y4u4}
 \end{gather}
 respectively, where $v_{i,j}$ denotes $v_i+v_j$ for $1\leq i< j \leq5$.

 From (\ref{y3u3}) and (\ref{y4u4}) we see
 $$(v_{1,2}+tv_2v_4)(v_{4,5}+tv_2v_4)=tv_2v_{3,4}tv_4v_{2,3}, $$
 and then, from $v_2v_4-v_{2,3}v_{3,4}=-v_3(v_2+v_3+v_4)=v_3v_{1,5}$ and $v_{1,2}+v_{4,5}=-v_3$ we see
 \begin{align}\label{1t^2}
    v_2v_3v_4v_{1,5}t^2-v_2v_3v_4t+v_{1,2}v_{4,5}=0.
 \end{align}

 \subsection{Discussion on the Case $(v_2+v_3)(v_1+v_5)=0$}

 In this subsection we deal with the case $v_{2,3}v_{1,5}=0$.

 At first, we assume $v_{2,3}=0$.
 Clearly, we have $v_{1,5}=-v_4\neq 0$. From (\ref{y3u3}) we see
 \begin{align}
    tv_2v_4+v_{1,2}=0,\label{b00}
 \end{align}
 and $v_{1,2}\neq 0$.

 If $v_{3,4}=0$, then we have $v_4=v_2$. From (\ref{y4u4}) we see $v_{4,5}+tv_2v_4=0$ and thus $v_5=v_1$, $v_3=-v_2=v_1+v_3+v_4+v_5=2v_1$, $4tv_1=1$. Then, we see $u_2+u_4=0$ from (\ref{y2u2}), $u_3+u_5=0$ from (\ref{5l_1^6}), $u_1=0$ from (\ref{5l_0^6}), contradicts  $u_1\in\mathbb{F}_q^{\ast}$.

 Hence, we must have $v_{3,4}\neq0$. Let $\beta$ be the element in $\mathbb{F}_q^{\ast}$ such that
 \begin{align}\label{41u_4}
    u_4=v_{1,2}v_{3,4}v_1v_5\beta.
 \end{align}
 Then, from (\ref{y4u4}) and (\ref{b00}) we see $\delta=v_1+v_{1,2}=-(v_{4,5}+tv_2v_4)\neq0$ and
 \begin{align}\label{41u_3}
    u_3=-v_4 v_1v_5\delta\beta.
 \end{align}
 From (\ref{y2u2}) we see
 \begin{align}\label{41u_2}
    u_2&=v_1^{-1}tv_3v_4u_4=v_{1,2}^2v_{3,4}v_5\beta.
 \end{align}
 From (\ref{5l_1^6}) and (\ref{41u_4}--\ref{41u_2}) we have
 \begin{align}
    u_5&=v_5^{-1}(v_1u_2+v_{1,2}u_3+v_1u_4)\nonumber\\
       &=(v_1v_{1,2}^2v_{3,4}-v_{1,2}v_4 v_1\delta+v_{1,2}v_{3,4}v_1^2)\beta\nonumber\\
       &=v_1v_{1,2}(v_{3,4}(v_{1,2}+v_1)-v_4\delta)\beta\nonumber\\
       &=v_1v_{1,2}v_3\delta\beta.\label{41u_5}
 \end{align}
 Then, from (\ref{5l_0^6}) we have
 \begin{align}\label{41u_1}
    u_1&=-u_2-u_3-u_4-u_5\nonumber\\
       &=-(v_{1,2}^2v_{3,4}v_5- v_4 v_1v_5\delta+v_{1,2}v_{3,4}v_1v_{5}+v_1v_{1,2}v_3\delta)\beta\nonumber\\
       &=-(v_{1,2}v_{3,4}v_5-v_4v_1v_5+v_1v_{1,2}v_3)\delta\beta\nonumber\\
       &=-\left(v_{1,2}(v_4v_5+v_3v_{1,5})-v_4v_1v_5\right)\delta\beta\nonumber\\
       &=-(v_{1,2}(v_5-v_3)-v_1v_5)v_4\delta\beta\nonumber\\
       &=-(v_5-v_3+v_1)v_2v_4\delta\beta=v_{3,4}v_2v_4\delta\beta.
 \end{align}
 From (\ref{41u_4}--\ref{41u_1}) and $v_1+v_4+v_5=-v_{2,3}=0$ we have
 \begin{align*}
    &(v_1v_2v_4\delta\beta^2)^{-1}\sum_{k=1}^4x_{k+1}^2v_k\\
    =&(v_1v_2v_4\delta\beta^2)^{-1}\left(v_1u_1^2+v_4u_5^2-v_2u_3(2u_1+2u_2+u_3)\right)\\
    =&v_2\delta(v_4v_{3,4}^2+v_1v_{1,2}^2)+v_5(2v_{3,4}v_2v_4\delta+2 v_{1,2}^2v_{3,4}v_5-v_1v_4v_5\delta)\\
    =&2v_5^2v_{1,2}^2v_{3,4}+v_2v_4v_{3,4}\delta(v_{3,4}+2v_5)-v_1\delta\left(v_3v_{1,2}^2+v_4(v_{1,2}+v_{3,4})^2\right)\\
    =&v_{3,4}\left(2v_5^2v_{1,2}^2-v_2v_4\delta(\delta+v_4)-v_1\delta(v_{1,2}^2+v_4(\delta+v_4))\right)\\
    =&v_{3,4}v_{1,2}\left(2(v_1+v_4)^2v_{1,2}-v_4\delta(\delta+v_4)-v_1\delta v_{1,2}\right)\\
    =&v_{3,4}v_{1,2}\left((2v_{1,2}-\delta)v_4^2+(4v_1v_{1,2}-\delta^2)v_4+(2v_1-\delta)v_1v_{1,2}\right)\\
    =&v_{3,4}v_{1,2}v_2(v_4^2-v_2v_4-v_1v_{1,2})\\
    =&v_{3,4}v_{1,2}v_2v_5(v_{1,2}-v_4),
 \end{align*}
 and thus, we have $v_4=v_{1,2}$ and the following lemma.

 \begin{lemma}\label{v_23eq0}
    In $\Lambda_{5,q}$ there is a cycle of type $(u_1,v_1,\ldots,u_5,v_5)$ with
    \begin{align*}
       v_2+v_3=0
    \end{align*}
    if and only if there exist $c,d,r\in\mathbb{F}_q^{\ast}$ with $c\notin\{-d,-2d\}$ such that
    \begin{gather}
       v_1=c+d,\,\,v_2=-c,\,\,v_3=c,\,\,v_4=d,\,\,v_5=-c-2d,\label{v23eq0v}\\
       u_1=cr,\,\,u_2=dr,\,\,u_3=-(c+2d)r,\,\,u_4=(c+d)r,\,\,u_5=-cr.\label{v23eq0u}
    \end{gather}
 \end{lemma}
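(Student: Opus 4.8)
The plan is to prove Lemma~\ref{v_23eq0} by following the forward implication developed in the preceding paragraphs and then verifying the converse directly. The setup assumes $\epsilon=(u_1,v_1,\ldots,u_5,v_5)$ is a tuple over $\mathbb{F}_q^*$ satisfying the cycle conditions (\ref{sumv1-5eq0}--\ref{5l_5^6}), together with the branch hypothesis $v_{2,3}=v_2+v_3=0$. From the earlier discussion I already have $v_{1,5}=-v_4\neq0$, and equation (\ref{b00}), namely $tv_2v_4+v_{1,2}=0$, which forces $v_{1,2}\neq0$. The first key step is to dispose of the degenerate subcase $v_{3,4}=0$: as shown, this leads through (\ref{y4u4}) to $v_5=v_1$, $v_3=2v_1$, $4tv_1=1$, and ultimately to $u_1=0$ via (\ref{y2u2}), (\ref{5l_1^6}) and (\ref{5l_0^6}), contradicting $u_1\in\mathbb{F}_q^*$. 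Hence $v_{3,4}\neq0$, and I may introduce the parameter $\beta\in\mathbb{F}_q^*$ through (\ref{41u_4}).

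The second step is to propagate $\beta$ through all the $u_k$'s. Setting $\delta=v_1+v_{1,2}$, equations (\ref{y4u4}), (\ref{b00}), (\ref{y2u2}), (\ref{5l_1^6}) and (\ref{5l_0^6}) successively give the closed forms (\ref{41u_3}), (\ref{41u_2}), (\ref{41u_5}) and (\ref{41u_1}) for $u_3,u_2,u_5,u_1$, each a monomial in the $v$'s times $\beta$. These are exactly the algebraic identities laid out before the lemma, so this step is a bookkeeping exercise in substitution and simplification using the relation $v_1+v_2+v_3+v_4+v_5=0$ (equivalently $v_1+v_4+v_5=0$ under $v_{2,3}=0$). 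The third and decisive step is to impose the one remaining cycle condition (\ref{5l_4^6}), which reads $\sum_{k=1}^4 x_{k+1}^2 v_k=0$. Substituting the expressions for the $u_k$ (whence the partial sums $x_{k+1}$) into this quadratic constraint and factoring out the nonzero quantity $v_1v_2v_4\delta\beta^2$ collapses, through the displayed chain of simplifications, to the factor $v_{3,4}v_{1,2}v_2v_5(v_{1,2}-v_4)$. Since all but the last factor are nonzero, this yields $v_4=v_{1,2}=v_1+v_2$.

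Once $v_4=v_1+v_2$ is known, I obtain the parametrization directly: writing $c=-v_2=v_3$ and $d=v_4-v_1$... more cleanly, setting $c=v_3$ and $d=v_4$, one has $v_2=-c$, and from $v_4=v_1+v_2$ that $v_1=v_4-v_2=d+c$, whence $v_5=-(v_1+v_2+v_3+v_4)=-(c+d-c+c+d)=-c-2d$, matching (\ref{v23eq0v}). The conditions $v_1,\ldots,v_5\in\mathbb{F}_q^*$ translate precisely into $c,d\in\mathbb{F}_q^*$ with $c\neq-d$ (so $v_1\neq0$) and $c\neq-2d$ (so $v_5\neq0$). Back-substituting $v_4=v_{1,2}$ into (\ref{41u_4}--\ref{41u_1}) and absorbing the common scalar into a single parameter $r\in\mathbb{F}_q^*$ produces the $u_k$-values in (\ref{v23eq0u}); the relation $\delta=v_1+v_{1,2}=v_1+v_4=(c+d)+d$ and $\beta$ get folded into $r$. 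For the converse, I would simply take any $c,d,r$ as in the statement, define $v_k,u_k$ by (\ref{v23eq0v}--\ref{v23eq0u}), and verify mechanically that $v_{2,3}=0$ and that all six conditions (\ref{sumv1-5eq0}--\ref{5l_5^6}) hold, using Theorem~\ref{path} to confirm that the resulting backtrackless walk closes up into a cycle (which it must, since any backtrackless circuit of length $10<2g(\Lambda_{5,q})$ is automatically a cycle by the girth bound $g(\Lambda_{5,q})\geq10$ from Theorem~\ref{lamda34}(g)).

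I expect the main obstacle to be the third step, the algebraic collapse of condition (\ref{5l_4^6}) to the single factor $(v_{1,2}-v_4)$. The intermediate expression is a genuinely messy quadratic in the $v_k$'s, and the simplification relies on repeatedly invoking nonobvious identities such as $v_2v_4-v_{2,3}v_{3,4}=v_3v_{1,5}$ and $v_{1,2}+v_{4,5}=-v_3$, as well as the linear constraint $v_1+v_4+v_5=0$; keeping track of signs and ensuring nothing vanishes prematurely is the delicate part. The degenerate-case elimination and the converse verification are routine by comparison, the former being a short contradiction argument and the latter a direct substitution.
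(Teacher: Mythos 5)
Your proposal is correct and follows essentially the same route as the paper: the forward direction reproduces the computation preceding the lemma (eliminating the subcase $v_{3,4}=0$, expressing $u_1,\ldots,u_5$ via (\ref{41u_4}--\ref{41u_1}), and collapsing (\ref{5l_4^6}) to the factor $v_{1,2}-v_4$), then parametrizes with $c=v_3$, $d=v_4$ and $r=-d(c+d)(c+2d)\beta$, while the converse is the same reverse substitution the paper invokes. No gaps.
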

 \begin{proof}
    "$\Longrightarrow$": From $v_2+v_3=0$, $v_4=v_1+v_2$ and $v_1+v_4+v_5=0$ we have (\ref{v23eq0v}) for some $c,d\in\mathbb{F}_q^{\ast}$ with $c\notin\{-d,-2d\}$. Furthermore, from (\ref{41u_4}--\ref{41u_1}) we see (\ref{v23eq0u}) is valid for $r=-d(c+d)(c+2d)\beta$.

    The if-part follows clearly from the argument preceding this lemma.
 \end{proof}

 Assume now $v_{1,5}=0$. Clearly, we have $v_{2,3}=-v_4\neq0$.

 From (\ref{1t^2}) we see $v_2v_3v_4t=v_{1,2}v_{4,5}$ and $v_{1,2},\,v_{4,5}\in\mathbb{F}_q^{\ast}$. Then, from (\ref{y2u2}) and (\ref{y3u3}) we see $v_2v_1u_2=v_{1,2}v_{4,5}u_4$ and $v_2v_{1,2}u_3=-v_4v_{4,5}u_4$, respectively. Let $\beta$ be the element in $\mathbb{F}_q^{\ast}$ such that
 \begin{align}\label{412u_4}
    u_4=v_1v_2v_{1,2}\beta.
 \end{align}
 Then, we have
 \begin{align}
    u_2&=v_{1,2}^2v_{4,5}\beta,\label{412u_2}\\
    u_3&=-v_1v_4v_{4,5}\beta.\label{412u_3}
 \end{align}
 From (\ref{5l_1^6}) and (\ref{412u_4}--\ref{412u_3}) we have
 \begin{align}
    u_5&=-v_1^{-1}(v_1u_2+v_{1,2}u_3-v_{4,5}u_4)\nonumber\\
       &=-(v_{1,2}-v_4-v_2)v_{1,2}v_{4,5}\beta=v_{1,2}v_{4,5}^2\beta.\label{412u_5}
 \end{align}
 Then from (\ref{5l_0^6}) we have
 \begin{align}
    u_1&=-(u_2+u_3+u_4+u_5)\nonumber\\
       &=-(v_{1,2}^2v_{4,5}-v_1v_4v_{4,5}+v_1v_2v_{1,2}+v_{1,2}v_{4,5}^2)\beta\nonumber\\
       &=-\left(v_{1,2}v_{4,5}(v_{1,2}+v_{4,5})+v_1(v_4(v_{1,2}+v_3)+v_2v_{1,2})\right)\beta\nonumber\\
       &=\left(v_{1,2}v_{4,5}+v_1(v_{1,2}-v_4)\right)v_3\beta=v_2v_4v_3\beta.\label{412u_1}
 \end{align}
 From $v_{1,5}=v_2+v_3+v_4=0$ and (\ref{412u_4}--\ref{412u_1}) we see $\beta^{-2}\sum_{k=1}^4x_{k+1}^2v_k$ is equal to
 \begin{align}
    &\beta^{-2}\left(v_1u_1^2+v_2(u_1+u_2)^2+v_3(u_4+u_5)^2+v_4u_5^2\right)\nonumber\\
    =&v_{1,2}\left((v_2v_3v_4)^2+2v_2^2v_3v_4v_{1,2}v_{4,5}+v_2v_{1,2}^3v_{4,5}^2\nonumber\right.\\
    &\left.+v_3v_{1,2}(v_1v_2+v_{4,5}^2)^2+v_4v_{1,2}v_{4,5}^4\right)\nonumber\\
    =&v_{1,2}\left((v_2v_3v_4)^2+2v_2^2v_3v_4v_{1,2}v_{4,5}+v_3v_{1,2}(v_1v_2+v_{4,5}^2)^2\nonumber\right.\\
    &\left.+(v_2v_{1,2}^2+v_4(v_{1,2}+v_3)^2)v_{1,2}v_{4,5}^2\right)\nonumber\\
    =&v_{1,2}v_3\left(v_3v_2^2v_4^2+2v_2^2v_4v_{1,2}v_{4,5}+v_{1,2}(v_1v_2+v_{4,5}^2)^2\nonumber\right.\\
    &\left.+(v_4v_3+2v_4v_{1,2}-v_{1,2}^2)v_{1,2}v_{4,5}^2\right)\nonumber\\
    =&v_{1,2}v_3\left((2v_4v_{1,2}+v_4v_3-v_{1,2}^2+2v_1v_2+v_{4,5}^2)v_{1,2}v_{4,5}^2\nonumber\right.\\
    &\left.+2v_2^2v_4v_{1,2}v_{4,5}+(v_1^2v_{1,2}-v_{2,4}v_4^2)v_2^2\right)\nonumber\\
    =&v_{1,2}v_3v_{4,5}\left((2v_4v_{1,2}-v_4v_{2,4}-v_1^2-v_2^2+(v_4-v_1)^2)v_{1,2}v_{4,5}\nonumber\right.\\
    &\left.+2v_2^2v_4v_{1,2}-((v_1^2+v_1v_4+v_4^2)+v_2(v_1+v_4))v_2^2\right)\nonumber\\
    =&v_{1,2}v_3v_{4,5}v_2\left((v_4-v_2)v_{1,2}(v_4-v_1)+2v_2v_4v_{1,2}-(v_1^2+v_{2,4}v_1+v_4v_{2,4})v_2\right)\nonumber\\
    =&v_{1,2}v_3v_{4,5}v_2\left(-v_4v_1^2+v_4(v_4-v_2)v_1\right)\nonumber\\
    =&v_{1,2}v_3v_{4,5}v_2v_4v_1(v_4-v_{1,2})\nonumber
 \end{align}
 and thus, we have $v_4=v_{1,2}$ and the following lemma.

 \begin{lemma}\label{v_15eq0}
    In $\Lambda_{5,q}$ there is a cycle of type $(u_1,v_1,\ldots,u_5,v_5)$ with
    \begin{align*}
       v_1+v_5=0
    \end{align*}
    if and only if there exist $b,c,r\in\mathbb{F}_q^{\ast}$ with $c\notin\{-b,-2b\}$ such that
    \begin{gather}
       v_1=-2b-c,\,\,v_2=b,\,\,v_3=c,\,\,v_4=-b-c,\,\,v_5=2b+c,\label{v_15eq0v}\\
       u_1=cr,\,\,u_2=-(b+c)r,\,\,u_3=(2b+c)r,\,\,u_4=-(2b+c)r,\,\,u_5=br.\label{v_15eq0u}
    \end{gather}
 \end{lemma}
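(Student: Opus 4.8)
The plan is to treat this as the exact companion of Lemma~\ref{v_23eq0}, exploiting the fact that the displayed derivation preceding the statement has already carried out the substantive work: under the standing hypothesis $v_{1,5}=0$ it expressed every $u_i$ through the single scalar $\beta$ (equations (\ref{412u_4})--(\ref{412u_1})) and then, by evaluating $\sum_{k=1}^4 x_{k+1}^2 v_k$ through (\ref{5l_4^6}), forced the relation $v_4=v_{1,2}$. Granting these, the lemma reduces to repackaging the data in two free parameters.

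For the forward implication I would set $b=v_2$ and $c=v_3$. Combining $v_1+v_5=0$, the closure $v_1+\cdots+v_5=0$ (which yields $v_2+v_3+v_4=0$, hence $v_4=-b-c$), and the derived $v_4=v_{1,2}=v_1+v_2$, one solves a trivial linear system to obtain $v_1=-2b-c$ and $v_5=2b+c$, giving (\ref{v_15eq0v}). The requirement that $v_1,v_4,v_5$ lie in $\mathbb{F}_q^{\ast}$ translates precisely into $c\notin\{-b,-2b\}$, while $v_{4,5}=b\neq0$ is automatic. Substituting $v_4=v_{1,2}$ into (\ref{412u_4})--(\ref{412u_1}) and factoring, each $u_i$ is seen to share the common factor $-b(b+c)\beta$; setting $r=-b(b+c)\beta\in\mathbb{F}_q^{\ast}$ then reproduces (\ref{v_15eq0u}) after matching against the claimed coefficients $c,-(b+c),(2b+c),-(2b+c),b$.

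For the converse I would either substitute (\ref{v_15eq0v})--(\ref{v_15eq0u}) directly into (\ref{sumv1-5eq0})--(\ref{5l_5^6}) and verify each of the six closure equations, or---more economically---observe that the forward steps are reversible: once the two scalar consistency conditions (\ref{1t^2}) and $v_4=v_{1,2}$ hold (both immediate for the stated tuple), relations (\ref{y2u2})--(\ref{y4u4}) together with (\ref{5l_1^6}) and (\ref{5l_0^6}) recover exactly the listed $u_i$. Since $g(\Lambda_{5,q})\geq10$ by Theorem~\ref{lamda34}(g), any such backtrackless closure of length $10$ is automatically a genuine cycle, so no separate distinctness check is needed.

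The computations are light, so I do not expect a serious obstacle inside the lemma itself; the single point demanding care is the bookkeeping in the converse, namely confirming that the quadratic constraints (\ref{5l_3^6}) and (\ref{5l_5^6})---which were consumed while producing (\ref{y2u2})--(\ref{y4u4}) during the forward pass---are genuinely reinstated by the explicit parametrization rather than lost, and that the genericity conditions $c\notin\{-b,-2b\}$ together with $b,c,r\in\mathbb{F}_q^{\ast}$ are exactly what keeps all ten entries nonzero.
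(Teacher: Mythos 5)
Your proposal is correct and follows essentially the same route as the paper: both directions lean on the preceding derivation (equations (\ref{412u_4})--(\ref{412u_1}) and the forced relation $v_4=v_{1,2}$), with the forward implication obtained by solving the linear system for $v_1,\dots,v_5$ in terms of $b=v_2$, $c=v_3$ and factoring out $r=-b(b+c)\beta$, exactly as in the paper. The paper's own proof states the converse simply as "follows from the argument preceding this lemma," which is the reversibility/direct-verification point you spell out.
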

 \begin{proof}
    "$\Longrightarrow$": From $v_2+v_3+v_4=0$, $v_4=v_1+v_2$ and $v_1+v_5=0$ we have (\ref{v_15eq0v}) for some $b,c\in\mathbb{F}_q^{\ast}$ with $c\notin\{-b,-2b\}$. Furthermore, from (\ref{412u_4}--\ref{412u_1}) we see (\ref{v_15eq0u}) is valid for $r=-b(b+c)\beta$.

    The if-part follows clearly from the argument preceding this lemma.
 \end{proof}

 \subsection{Discussion on the Case $(v_2+v_3)(v_1+v_5)\neq0$}
 In this subsection we deal with the case $v_{2,3}v_{1,5}\neq 0$.

 Let $\beta$ be the element in $\mathbb{F}_q^{\ast}$ such that
 \begin{align}\label{42u_3}
    u_3=-tv_4v_{2,3}v_1v_{1,5}v_5\beta.
 \end{align}
 From (\ref{y3u3}), we see
 \begin{align}\label{42u_4}
    u_4=(tv_2v_4+v_{1,2})v_1v_{1,5}v_5\beta.
 \end{align}
 From (\ref{y2u2}) and (\ref{1t^2}), we have
 \begin{align}
    u_2&=tv_3v_4(tv_2v_4+v_{1,2})v_{1,5}v_5\beta\nonumber\\
       &=(t^2v_2v_3v_4v_{1,5}+tv_3v_{1,2}v_{1,5})v_4v_5\beta\nonumber\\
       &=(tv_3(v_2v_4+v_{1,2}v_{1,5})-v_{1,2}v_{4,5})v_4v_5\beta.\label{42u_2}
 \end{align}
 Then, from (\ref{5l_1^6}) we see
 \begin{align}
    u_5=&v_5^{-1}(v_1u_2+v_{1,2}u_3-v_{4,5}u_4)\nonumber\\
       =&v_1\left(tv_3(v_2v_4+v_{1,2}v_{1,5})-v_{1,2}v_{4,5}\right)v_4\beta-v_{1,2}tv_4v_{2,3}v_1v_{1,5}\beta\nonumber\\
        &-v_{4,5}(tv_2v_4+v_{1,2})v_1v_{1,5}\beta\nonumber\\
       =&\left(tv_4(v_3v_2v_4+v_3v_{1,2}v_{1,5}-v_{1,2}v_{2,3}v_{1,5}-v_{4,5}v_2v_{1,5})\nonumber\right.\\
        &\left.-v_{1,2}v_{4,5}(v_4+v_{1,5})\right)v_1\beta\nonumber\\
       =&\left(tv_4v_2(v_3v_4-v_{1,2}v_{1,5}-v_{4,5}v_{1,5})+v_{1,2}v_{4,5}v_{2,3}\right)v_1\beta\nonumber\\
       =&\left(tv_4v_2v_3(v_4+v_{1,5})+v_{1,2}v_{4,5}v_{2,3}\right)v_1\beta\nonumber\\
       =&-(tv_2v_3v_4-v_{1,2}v_{4,5})v_1v_{2,3}\beta,\label{42u_5}
 \end{align}
 Furthermore, from (\ref{5l_0^6}) we have
 \begin{align}
    u_1=&-u_2-u_3-u_4-u_5\nonumber\\
       =&-(tv_3(v_2v_4+v_{1,2}v_{1,5})-v_{1,2}v_{4,5})v_4v_5\beta+tv_4v_{2,3}v_1v_{1,5}v_5\beta\nonumber\\
        &-(tv_2v_4+v_{1,2})v_1v_{1,5}v_5\beta+(tv_4v_2v_3-v_{1,2}v_{4,5})v_1v_{2,3}\beta\nonumber\\
       =&t(-v_3v_4v_5(v_2v_4+v_{1,2}v_{1,5})+v_3v_4v_1v_{1,5}v_5+v_2v_3v_4v_1v_{2,3})\beta\nonumber\\
        &+v_{1,2}(v_{4,5}v_4v_5-v_1v_{1,5}v_5-v_{4,5}v_1v_{2,3})\beta\nonumber\\
       =&t(-v_2v_3v_4^2v_5-v_3v_4v_5v_{1,5}v_2+v_2v_3v_4v_1v_{2,3})\beta\nonumber\\
        &+v_{1,2}(v_{4,5}v_4v_5-v_1(v_5(v_{1,5}+v_{2,3})+v_4v_{2,3}))\beta\nonumber\\
       =&t(v_2v_3v_4v_5v_{2,3}+v_2v_3v_4v_1v_{2,3})\beta+v_{1,2}v_4(v_{4,5}v_5-v_1(v_{2,3}-v_5))\beta\nonumber\\
       =&tv_2v_3v_4v_{2,3}v_{1,5}\beta+v_{1,2}v_4(v_5(v_1+v_{4,5})-v_1v_{2,3})\beta\nonumber\\
       =&(tv_2v_3-v_{1,2})v_4v_{2,3}v_{1,5}\beta.\label{42u_1}
 \end{align}
 Then, from
 \begin{align*}
    u_1+u_2&=\big((tv_2v_3-v_{1,2})v_{2,3}v_{1,5}+(tv_3(v_2v_4+v_{1,2}v_{1,5})-v_{1,2}v_{4,5})v_5\big)v_4\beta\\
           &=\big(tv_3(v_2v_{2,3}v_{1,5}+v_2v_4v_5+v_{1,2}v_{1,5}v_5)-v_{1,2}(v_{2,3}v_{1,5}+v_{4,5}v_5)\big)v_4\beta\\
           &=\big(tv_3(v_2(v_{2,3}v_1-v_{1,5}v_5)+v_{1,2}v_{1,5}v_5)-v_{1,2}(v_{2,3}v_1-v_5v_1)\big)v_4\beta\\
           &=\big(tv_3(v_2v_{2,3}+v_{1,5}v_5)+v_{1,2}(v_5-v_{2,3})\big)v_1v_4\beta
 \end{align*}
 and
 \begin{align*}
    u_1+u_2+u_3&=(u_1+u_2)-tv_4v_{2,3}v_1v_{1,5}v_5\beta\\
               &=\big(tv_2(v_3v_{2,3}-v_{1,5}v_5)+v_{1,2}(v_5-v_{2,3})\big)v_1v_4\beta
 \end{align*}
 we see
 \begin{align}
    &\beta^{-2}\sum_{k=1}^4x_{k+1}^2v_k=\beta^{-2}\left(v_4u_5^2+v_1u_1^2+v_2(u_1+u_2)^2+v_3(u_1+u_2+u_3)^2\right)\nonumber\\
    =&v_4(tv_2v_3v_4-v_{1,2}v_{4,5})^2v_1^2v_{2,3}^2+v_1(tv_2v_3-v_{1,2})^2v_4^2v_{2,3}^2v_{1,5}^2+\nonumber\\
      &\ \ \ \ \ v_2\big(tv_3(v_2v_{2,3}+v_5v_{1,5})+v_{1,2}(v_5-v_{2,3})\big)^2v_1^2v_4^2+\nonumber\\
      &\ \ \ \ \ v_3\big(tv_2(v_3v_{2,3}-v_5v_{1,5})+v_{1,2}(v_5-v_{2,3})\big)^2v_1^2v_4^2\nonumber\\
    =&v_1v_2v_3v_4^2v_{2,3}\xi_2t^2+2v_1v_2v_3v_4^2v_{2,3}^2\xi_1t+v_1v_4v_{2,3}\xi_0,\label{transl_4^6}
 \end{align}
 where
 \begin{align*}
    \xi_2&=v_1v_2v_3v_4v_{2,3}+v_2v_3v_{2,3}v_{1,5}^2+v_1v_2v_3v_{2,3}^2+v_1v_5^2v_{1,5}^2\\
         &=-v_1v_2v_3v_{2,3}v_{1,5}+v_2v_3v_{2,3}v_{1,5}^2+v_1v_5^2v_{1,5}^2\\
         &=v_5v_{1,5}(v_1v_5v_{1,5}+v_2v_3v_{2,3}),\\
    \xi_1&=-v_1v_{1,2}v_{4,5}-v_{1,2}v_{1,5}^2+v_1v_{1,2}(v_5-v_{2,3})\\
         &=v_1v_{1,2}v_{1,5}-v_{1,2}v_{1,5}^2=-v_5v_{1,2}v_{1,5},\\
    \xi_0&=v_1v_{2,3}v_{1,2}^2v_{4,5}^2+v_4v_{2,3}v_{1,2}^2v_{1,5}^2+v_1v_4v_{1,2}^2(v_5-v_{2,3})^2\\
         &=v_{1,2}^2\big(v_1v_{2,3}v_{4,5}^2+v_4v_{2,3}v_{1,5}^2+v_1v_4(v_{4,5}+v_{1,5})^2\big)\\
         &=v_{1,2}^2v_{4,5}v_{1,5}(2v_1v_4-v_1v_{4,5}-v_4v_{1,5})\\
         &=-v_{1,2}^2 v_{4,5}v_{1,5}v_5v_{1,4}.
 \end{align*}
 Hence from $v_{2,3}v_{1,5}\neq0$, (\ref{5l_4^6}) and (\ref{transl_4^6}) we have
 \begin{align}\label{3t^2}
    v_2v_3v_4(v_1v_5v_{1,5}+v_2v_3v_{2,3})t^2-2v_2v_3v_4v_{2,3}v_{1,2}t-v_{1,2}^2v_{4,5}v_{1,4}=0.
 \end{align}

 If $v_{1,2}=0$, from (\ref{1t^2}) and (\ref{3t^2}), we have $t=v_{1,5}^{-1}$ and $v_1v_5v_{1,5}+v_2v_3v_{2,3}=0$, the later equality is equivalent to $v_3=v_{1,5}$.
 \begin{lemma}\label{v_12eq0}
    In $\Lambda_{5,q}$ there is a cycle of type $(u_1,v_1,\ldots,u_5,v_5)$ with
    \begin{align*}
       (v_2+v_3)(v_1+v_5)\neq0,\,v_1+v_2=0,
    \end{align*}
    if and only if there exist $b,c,r\in\mathbb{F}_q^{\ast}$ with $b\notin \{-c,-2c\}$ such that
    \begin{gather}
       v_1=-b,\,v_2=b,\,v_3=c,\,v_4=-b-2c,\,v_5=b+c,\label{v_12eq0v}\\
       u_1=cr,\,u_2=-(b+2c)r,\,u_3=(b+c)r,\,u_4=-br,\,u_5=br.\label{v_12eq0u}
    \end{gather}
 \end{lemma}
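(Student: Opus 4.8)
The plan is to reproduce, for this third sub-case, the two-part pattern used in Lemmas~\ref{v_23eq0} and \ref{v_15eq0}: establish the forward implication ``$\Longrightarrow$'' by converting the cycle conditions into the explicit parametrisation (\ref{v_12eq0v}--\ref{v_12eq0u}), and then obtain the if-part by reversing the construction of Section~4.2. Almost all of the forward work has already been done in the paragraph preceding the lemma, which extracts from $v_{1,2}=0$ the two facts $t=v_{1,5}^{-1}$ (forced by (\ref{1t^2})) and $v_3=v_{1,5}$ (forced by (\ref{3t^2})). I would first close the small gap in the latter: under $v_{1,2}=0$ and $v_1+\cdots+v_5=0$ the relation $v_1v_5v_{1,5}+v_2v_3v_{2,3}=0$ factors as $v_1(v_3+v_5)(v_{1,5}-v_3)=0$, and since $v_3+v_5=0$ would force the forbidden value $v_4=0$, the only admissible branch is $v_3=v_{1,5}$.

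For the forward direction I would first parametrise the $v_i$. Writing $b:=v_2=-v_1$ and $c:=v_3$, the equation $v_3=v_1+v_5$ gives $v_5=b+c$ and then $v_1+\cdots+v_5=0$ forces $v_4=-b-2c$, which is exactly (\ref{v_12eq0v}); here $v_{1,5}=c\neq0$ is automatic, while $v_4\neq0$ and $v_{2,3}=v_5=b+c\neq0$ translate precisely into $b\notin\{-c,-2c\}$. Next I would substitute $v_{1,2}=0$, $t=c^{-1}$ (legitimate since $v_{1,5}=v_3=c$) and these values into the master formulas (\ref{42u_3}--\ref{42u_1}). The decisive simplification is $tv_{1,5}=tv_3=1$, under which every factor of $t$ disappears and each $u_i$ collapses to a monomial in $b,c$ times the common scalar $\beta$; pulling out $r:=-b(b+2c)(b+c)\beta$ then reproduces (\ref{v_12eq0u}) entry by entry, and $r\in\mathbb{F}_q^{\ast}$ because $\beta,b,b+c,b+2c$ are all nonzero.

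For the if-part I would run this substitution backwards: given $b,c,r\in\mathbb{F}_q^{\ast}$ with $b\notin\{-c,-2c\}$, set $t:=c^{-1}$ and $\beta:=-r/\big(b(b+2c)(b+c)\big)\in\mathbb{F}_q^{\ast}$. The formulas (\ref{42u_3}--\ref{42u_1}) were built precisely so that (\ref{5l_0^6}), (\ref{5l_1^6}), (\ref{5l_3^6}), (\ref{5l_5^6}) and the relations (\ref{y2u2}--\ref{y3u3}) hold identically; the compatibility of (\ref{y4u4}) with (\ref{y3u3}) amounts to (\ref{1t^2}), which holds because $t=v_{1,5}^{-1}$, and the last outstanding condition (\ref{5l_4^6}) is exactly (\ref{3t^2}), which holds because $v_3=v_{1,5}$. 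Together with $v_1+\cdots+v_5=0$ this verifies all of (\ref{sumv1-5eq0}--\ref{5l_5^6}), so the tuple determines a genuine backtrackless circuit of length $10$, which is automatically a cycle by Theorem~\ref{lamda34}(g).

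I expect the main obstacle to be purely computational bookkeeping rather than anything conceptual: I must check that the collapse of (\ref{42u_3}--\ref{42u_1}) under $tv_{1,5}=tv_3=1$ yields exactly the five monomials of (\ref{v_12eq0u}) sharing the single scalar $r$, with the correct signs and no accidental cancellation that would make some $u_i$ vanish. Since (\ref{42u_3}--\ref{42u_1}) already carry products of up to five $v$-factors, the delicate point is confirming that after inserting $v_1=-b$, $v_2=b$, $v_3=c$, $v_4=-b-2c$, $v_5=b+c$ and $v_{1,2}=0$ the coefficients of $r$ come out to be $c,-(b+2c),(b+c),-b,b$ in order, which simultaneously pins down the value of $r$ used in the forward direction.
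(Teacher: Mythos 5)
Your proposal is correct and follows essentially the same route as the paper: both directions are read off from the computations preceding the lemma (in particular $t=v_{1,5}^{-1}$ from (\ref{1t^2}), $v_3=v_{1,5}$ from (\ref{3t^2}), and the substitution into (\ref{42u_3}--\ref{42u_1}) with $r=-b(b+c)(b+2c)\beta$), while the paper's printed proof simply states these conclusions. Your extra step justifying $v_3=v_{1,5}$ via the factorization $v_1(v_3+v_5)(v_{1,5}-v_3)=0$ and the exclusion of $v_3+v_5=0$ (which would force $v_4=0$) correctly fills a detail the paper passes over in the sentence before the lemma.
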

 \begin{proof}
    "$\Longrightarrow$": From $v_1+v_2=0$, $v_3=v_1+v_5$ and $v_3+v_4+v_5=0$ we have (\ref{v_12eq0v}) for some  $b,c\in\mathbb{F}_q^{\ast}$ with $b\notin \{-c,-2c\}$. Furthermore, from (\ref{42u_3}--\ref{42u_1}) we see (\ref{v_12eq0u}) is valid for $r=-b(b+c)(b+2c)\beta$.

    The if-part follows clearly from the argument preceding this lemma.
 \end{proof}

 Now we assume further $v_{1,2}\neq0$. From $v_{1,4}v_{1,2}\times$(\ref{1t^2})$+$(\ref{3t^2}), we get
 \begin{align}\label{1t}
    &(v_1v_5v_{1,5}+v_2v_3v_{2,3}+v_{1,4}v_{1,2}v_{1,5})t=v_{1,2}(2v_{2,3}+v_{1,4}).
 \end{align}

 If $2v_{2,3}+v_{1,4}=0$, i.e. $v_5=v_{2,3}$, from (\ref{1t}) we see
 \begin{align*}
    &v_1v_5v_{1,5}+v_2v_3v_{2,3}+v_{1,4}v_{1,2}v_{1,5}\\
    =&v_1v_{2,3}v_{1,2,3}+v_2v_3v_{2,3}-2v_{2,3}v_{1,2}v_{1,2,3}\\
    =&v_{2,3}(v_1v_{1,2}+v_3v_{1,2}-2v_{1,2}v_{1,2,3})\\
    =&-v_{2,3}v_{1,2}(v_1+v_3+2v_2)=0,
 \end{align*}
 and then $v_1=-v_3-2v_2, \,v_4=-v_1-2v_{2,3}=-v_3$.

 \begin{lemma}\label{v_23eqv_5}
    In $\Lambda_{5,q}$ there is a cycle of type $(u_1,v_1,\ldots,u_5,v_5)$ with
    \begin{align}\label{23eq5}
       (v_1+v_5)(v_1+v_2)\neq0,\, v_5=v_2+v_3,
    \end{align}
    if and only if there exist $b,c,r\in\mathbb{F}_q^{\ast}$ with $c\notin \{-b,-2b\}$ such that
    \begin{gather}
       v_1=-2b-c,\,v_2=b,\,v_3=c,\,v_4=-c,\,v_5=b+c,\label{23eq5v}\\
       u_1=cr,\,u_2=-cr,\,u_3=(b+c)r,\,u_4=-(2b+c)r,\,u_5=br.\label{23eq5u}
    \end{gather}
 \end{lemma}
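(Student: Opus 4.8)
The plan is to mirror the proof of Lemma~\ref{v_12eq0}, since the argument immediately preceding the statement already pins down the $v$-part: under the hypotheses $(v_1+v_5)(v_1+v_2)\neq0$ and $v_5=v_{2,3}$ we derived $v_1=-v_3-2v_2$ and $v_4=-v_3$, so writing $v_2=b$ and $v_3=c$ gives exactly (\ref{23eq5v}), with the admissibility constraints $b,c\in\mathbb{F}_q^*$ and $c\notin\{-b,-2b\}$ coming from requiring every $v_i$ nonzero together with $v_{1,5}=-b\neq0$ and $v_{1,2}=-(b+c)\neq0$. Thus all that remains for the ``$\Longrightarrow$'' direction is to determine $t$ and then read off the $u_i$ from (\ref{42u_3})--(\ref{42u_1}).

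First I would substitute the values (\ref{23eq5v}) into the quadratic (\ref{1t^2}). After clearing the common factor this becomes $bc^2t^2+c^2t-(b+c)=0$, whose two roots are $t=1/c$ and $t=-(b+c)/(bc)$. The key observation --- and the only genuinely non-routine step --- is that the second root is spurious: substituting the $v$-values into the factor $tv_2v_3-v_{1,2}$ appearing in (\ref{42u_1}) gives $tbc+(b+c)$, which vanishes precisely at $t=-(b+c)/(bc)$ and would force $u_1=0$, contradicting $u_1\in\mathbb{F}_q^*$. Hence $t=1/c$. With this value I would substitute (\ref{23eq5v}) and $t=1/c$ into each of (\ref{42u_3})--(\ref{42u_1}) and simplify; each $u_i$ then factors through the common quantity $b(b+c)(2b+c)$, so setting $r=b(b+c)(2b+c)\beta$ yields exactly (\ref{23eq5u}). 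Since $r\neq0$ whenever $\beta\neq0$ and $c\notin\{-b,-2b\}$, this is the claimed one-parameter family, and conversely every such $r$ arises from a suitable $\beta$.

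The ``$\Longleftarrow$'' direction follows from the construction preceding the lemma: the formulas (\ref{42u_3})--(\ref{42u_1}) were built so that, for any $t$ satisfying both (\ref{1t^2}) and (\ref{3t^2}), the resulting tuple obeys all of the cycle conditions (\ref{sumv1-5eq0})--(\ref{5l_5^6}), and when $v_5=v_{2,3}$ equation (\ref{3t^2}) is a scalar multiple of (\ref{1t^2}), so the root $t=1/c$ qualifies. Concretely, given $b,c,r$ as in the statement one sets $\beta=r/(b(b+c)(2b+c))$ and $t=1/c$, checks that the entries (\ref{23eq5v})--(\ref{23eq5u}) are all nonzero (using $c\notin\{-b,-2b\}$), and invokes $g(\Lambda_{5,q})\geq10$ from Theorem~\ref{lamda34}(g) to conclude that the resulting backtrackless circuit of length $10$ is a genuine cycle. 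The main obstacle is thus not conceptual but bookkeeping: carrying out the five substitutions in (\ref{42u_3})--(\ref{42u_1}) cleanly, and in particular spotting the spurious root $t=-(b+c)/(bc)$ so that the family stays one-dimensional.
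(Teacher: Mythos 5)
Your proposal is correct and follows essentially the same route as the paper: derive (\ref{23eq5v}) from the relations $v_1=-v_3-2v_2$, $v_4=-v_3$ already established before the lemma, factor (\ref{1t^2}) as $b(ct-1)(bct+b+c)$, discard the root $t=-(b+c)/(bc)$ because it would annihilate one of the $u_i$ (you use the factor $tv_2v_3-v_{1,2}$ in (\ref{42u_1}), the paper uses $tv_2v_4+v_{1,2}$ in (\ref{42u_4}) — both equal $\pm(tbc+b+c)$ here), and then set $r=b(b+c)(2b+c)\beta$; the converse verification via $t=c^{-1}$ and the degeneration of (\ref{3t^2}) into a multiple of (\ref{1t^2}) also matches the paper.
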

 \begin{proof}
    "$\Longrightarrow$": From $v_5=v_2+v_3$, $v_1=-v_3-2v_2$ and $v_4=-v_3$ we see (\ref{23eq5v}) for some $b,c\in\mathbb{F}_q^{\ast}$ with $c\notin \{-b,-2b\}$. From (\ref{42u_4}) we see $tv_2v_4+v_{1,2}=-tbc-b-c\neq0$ and thus from (\ref{1t^2}) and
    \begin{align}
       &v_2v_3v_4v_{1,5}t^2-v_2v_3v_4t+v_{1,2}v_{4,5}\nonumber\\
       =&b^2c^2t^2+bc^2t-(b+c)b=b(ct-1)(bct+b+c)\label{dett}
    \end{align}
    we see $t=c^{-1}$. Then, from (\ref{42u_3}--\ref{42u_1}), we see (\ref{23eq5u}) is valid for $r=b(b+c)(2b+c)\beta$.

    "$\Longleftarrow$": For $b,c,r\in\mathbb{F}_q^{\ast}$ with $c\notin \{-b,-2b\}$, let $(u_1,v_1,\ldots,u_5,v_5)$ be the tuple defined by (\ref{23eq5v}) and (\ref{23eq5u}). Clearly, we have (\ref{23eq5}). By setting $\beta=r(b(b+c)(2b+c))^{-1}$ and $t=c^{-1}$, one can check (\ref{42u_3}--\ref{42u_1}) easily. From (\ref{dett}), $2v_{2,3}=2(b+c)=-v_{1,4}$ and $v_1v_5v_{1,5}+v_2v_3v_{2,3}=(-2b-c)(b+c)(-b)+bc(b+c)=2b(b+c)^2=-v_{1,4}v_{1,2}v_{1,5}$, we see (\ref{1t^2}) and (\ref{3t^2}) are valid. Then, one can conclude that in $\Lambda_{5,q}$ there is a cycle of type $(u_1,v_1,\ldots,u_5,v_5)$ with (\ref{23eq5}).
 \end{proof}

 Now we assume $v_5\neq v_2+v_3$ further. From (\ref{1t}) we see
 \begin{gather}
    v_1v_5v_{1,5}+v_2v_3v_{2,3}+v_{1,4}v_{1,2}v_{1,5}\neq0,\\
    t=(v_1v_5v_{1,5}+v_2v_3v_{2,3}+v_{1,4}v_{1,2}v_{1,5})^{-1}v_{1,2}(2v_{2,3}+v_{1,4}).\label{1ttrans}
 \end{gather}

 Below we consider to deduce a simple condition for (\ref{1ttrans}) satisfying both (\ref{1t^2}) and (\ref{3t^2}). From $v_{1,5}\times$(\ref{3t^2})$-(v_1v_5v_{1,5}+v_2v_3v_{2,3})\times$(\ref{1t^2}) we see
 \begin{align}
    &(v_1v_5v_{1,5}+v_2v_3v_{2,3}-2v_{1,2}v_{2,3}v_{1,5})v_2v_3v_4t\nonumber\\
    =&v_{1,4}v_{4,5}v_{1,2}^2v_{1,5}+v_{1,2}v_{4,5}(v_1v_5v_{1,5}+v_2v_3v_{2,3})\nonumber\\
    =&v_{1,2}v_{4,5}(v_{1,4}v_{1,2}v_{1,5}+v_1v_5v_{1,5}+v_2v_3v_{2,3}).\label{2t}
 \end{align}
 From $v_2v_3v_4\times$(\ref{1t})$-$(\ref{2t}) we see
 \begin{align*}
    &(2v_{2,3}+v_{1,4})v_{1,2}v_{1,5}v_2v_3v_4t\\
    =&v_{1,2}\left((v_{2,3}-v_5)v_2v_3v_4-v_{4,5}(v_{1,4}v_{1,2}v_{1,5}+v_1v_5v_{1,5}+v_2v_3v_{2,3})\right)\\
    =&v_{1,2}\left(v_2v_3(v_{2,3}v_4-v_5v_4-v_{4,5}v_{2,3})-v_{4,5}v_{1,5}(v_{1,4}v_{1,2}+v_1v_5)\right)\\
    =&v_{1,2}v_{1,5}\left(v_2v_3v_5-v_{4,5}(v_{1,4}v_{1,2}+v_1v_5)\right),
 \end{align*}
 and thus from $v_{1,2}v_{1,5}\neq0$ we see
 \begin{align}\label{3t}
    (2v_{2,3}+v_{1,4})v_2v_3v_4t=v_2v_3v_5-v_{4,5}(v_{1,4}v_{1,2}+v_1v_5).
 \end{align}
 Then, from (\ref{1t}) and (\ref{3t}) we see
 \begin{align}
    &\left(v_2v_3v_{2,3}+(v_{1,4}v_{1,2}+v_1v_5)v_{1,5}\right)\left(v_2v_3v_5-(v_{1,4}v_{1,2}+v_1v_5)v_{4,5}\right)\nonumber\\
    =&(2v_{2,3}+v_{1,4})^2v_{1,2}v_2v_3v_4.\label{relationv}
 \end{align}
 From $v_{1,4}v_{1,2}+v_1v_5=v_2v_4-v_1v_3$, we see the difference of the two sides of (\ref{relationv}) is
 \begin{align*}
    &\left(v_2v_3v_{2,3}-(v_2v_4-v_1v_3)v_{2,3,4}\right)\left((v_2v_4-v_1v_3)v_{1,2,3}-v_2v_3v_{1,2,3,4}\right)\nonumber\\
    &-(2v_{2,3}+v_{1,4})^2v_{1,2}v_2v_3v_4=\sigma_3v_4^3+\sigma_2v_4^2+\sigma_1v_4+\sigma_0,
 \end{align*}
 where $v_{2,3,4}=v_{2,3}+v_4$, $v_{1,2,3}=v_1+v_{2,3}$, $v_{1,2,3,4}=v_{1,2,3}+v_4$ and
 \begin{align*}
    \sigma_3&=(-v_2)(v_2v_{1,2})-v_{1,2}v_2v_3=-v_2v_{1,2}v_{2,3},\\
    \sigma_2&=(-v_2)(-v_3v_{1,2,3}v_{1,2})+(-v_2v_{2,3}+v_1v_3)(v_2v_{1,2})-2(2v_{2,3}+v_1)v_{1,2}v_2v_3\\
            &=v_2v_{1,2}(v_3v_{1,2,3}-v_2v_{2,3}+v_1v_3-2(2v_{2,3}+v_1)v_3)\\
            &=-v_2v_{1,2}v_{2,3}(3v_3+v_2),\\
    \sigma_1&=(-v_2v_{2,3}+v_1v_3)(-v_3v_{1,2,3}v_{1,2})+(v_3v_{2,3}v_{1,2})(v_2v_{1,2})-(2v_{2,3}+v_1)^2v_{1,2}v_2v_3\\
            &=-v_{1,2}v_3\left((-v_2v_{2,3}+v_1v_3)v_{1,2,3}-v_{2,3}v_{1,2}v_2+(2v_{2,3}+v_1)^2v_2\right)\\
            &=-v_{1,2}v_3v_{2,3}(-v_2v_{1,2,3}+v_1^2+v_1v_3-v_{1,2}v_2+4v_{2,3}v_2+4v_1v_2)\\
            &=-v_{1,2}v_3v_{2,3}(v_1v_{1,3}-v_{1,2}v_2+3v_{1,2,3}v_2),\\
            &=-v_{1,2}v_3v_{2,3}(v_{1,2}^2+v_3v_{1,2}+2v_2v_3+v_2^2),\\
    \sigma_0&=(v_3v_{2,3}v_{1,2})(-v_3v_{1,2,3}v_{1,2})=-v_3^2v_{1,2}^2v_{2,3}v_{1,2,3}.
 \end{align*}
 Then, from $v_{1,2}v_{2,3}\neq0$ and (\ref{relationv}) we see
 \begin{align*}
    v_2v_4^3+v_2(3v_3+v_2)v_4^2+v_3(v_{1,2}^2+v_3v_{1,2}+2v_2v_3+v_2^2)v_4+v_3^2v_{1,2}v_{1,2,3}=0
 \end{align*}
 i.e.
 \begin{align}\label{relav1-4}
    (v_3v_{1,2}^2+v_3^2v_{1,2}+v_2v_4(v_2+2v_3+v_4))v_{3,4}=0.
 \end{align}
 From (\ref{42u_1}) and (\ref{3t}) we see
 \begin{align*}
    &(2v_{2,3}+v_{1,4})(v_{2,3}v_{1,5}\beta)^{-1}u_1\\
    =&(2v_{2,3}+v_{1,4})(tv_2v_3-v_{1,2})v_4\\
    =&v_2v_3v_5-v_{4,5}(v_{1,4}v_{1,2}+v_1v_5)-v_{1,2}v_4(2v_{2,3}+v_{1,4})\\
    =&(v_2v_3-v_{4,5}v_1)v_5-(v_{4,5}-v_4)v_{1,4}v_{1,2}-2(v_{2,3}+v_{1,4})v_{1,2}v_4\\
    =&(v_2v_3-v_{4,5}v_1-v_{1,4}v_{1,2}+2v_{1,2}v_4)v_5\\
    =&(v_2v_3-v_{4,5}v_1-v_{1,2}v_1+v_{1,2}v_4)v_5\\
    =&(v_2v_3+v_1v_3+v_{1,2}v_4)v_5=v_{1,2}v_{3,4}v_5,
 \end{align*}
 and thus we have $v_{3,4}\neq0$. Then, from (\ref{relav1-4}) we have
 \begin{align}\label{relav}
    v_3v_{1,2}^2+v_3^2v_{1,2}+v_2v_4(v_{2,3}+v_{3,4})=0.
 \end{align}
 and thus
 \begin{align}
     &v_2v_3v_{2,3}+v_{1,5}(v_{1,4}v_{1,2}+v_1v_5)\nonumber\\
    =&v_2v_3v_{2,3}-(v_{2,3}+v_4)(v_4v_2-v_1v_3)\nonumber\\
    =&v_3v_{2,3}(v_2+v_1)+(v_1v_3-v_{2,3}v_2)v_4-v_2v_4^2\nonumber\\
    =&v_{3}v_{1,2}(v_{2,3}+v_{1,2}+v_3)+(v_1v_3-v_{2,3}v_2+v_2(v_2+2v_3))v_4\nonumber\\
    =&v_3v_{1,2}(v_1+2v_{2,3})+(v_1v_3+v_2v_3)v_4\nonumber\\
    =&v_3v_{1,2}(v_1+2v_{2,3}+v_4)\nonumber\\
    =&v_3v_{1,2}(2v_{2,3}+v_{1,4}).\label{relavtrans}
 \end{align}
 Then, from (\ref{1ttrans}) we see $t=v_3^{-1}$ and then, from (\ref{relav}) and (\ref{42u_3}--\ref{42u_1}) we have
 \begin{align}
    u_1&=(tv_2v_3-v_{1,2})v_4v_{2,3}v_{1,5}\beta=-v_1v_4v_{2,3}v_{1,5}\beta,\label{42finalu_1}\\
    u_2&=(tv_3(v_2v_4+v_{1,2}v_{1,5})-v_{1,2}v_{4,5})v_4v_5\beta\nonumber\\
       &=(v_2v_4+v_{1,2}(v_1-v_4))v_4v_5\beta\nonumber\\
       &=-(v_{1,2}-v_4)(v_{1,2}+v_{3,4})v_1v_4\beta\nonumber\\
       &=-(v_{1,2}^2+v_3v_{1,2}-v_4v_{3,4})v_1v_4\beta\nonumber\\
       &=v_3^{-1}(v_2(v_{2,3}+v_{3,4})+v_3v_{3,4})v_1v_4^2\beta\nonumber\\
       &=v_3^{-1}v_{2,3}(v_2+v_{3,4})v_1v_4^2\beta\nonumber\\
       &=-v_3^{-1}v_{2,3}v_{1,5}v_1v_4^2\beta,\label{42finalu_2}\\
    u_3&=-tv_4v_{2,3}v_1v_{1,5}v_5\beta=-v_3^{-1}v_4v_{2,3}v_1v_{1,5}v_5\beta,\label{42finalu_3}\\
    u_4&=(tv_2v_4+v_{1,2})v_1v_{1,5}v_5\beta\nonumber\\
       &=-v_3^{-1}(v_2v_4+v_3v_{1,2})(v_{1,2,3}+v_4)v_1v_{1,5}\beta\nonumber\\
       &=-v_3^{-1}(v_3v_{1,2}v_{1,2,3}+v_4(v_2v_{1,2,3}+v_2v_4+v_3v_{1,2}))v_1v_{1,5}\beta\nonumber\\
       &=-v_3^{-1}(-v_2(v_2+2v_3+v_4)+(v_{2,3}v_{1,2}+v_2v_{3,4}))v_4v_1v_{1,5}\beta\nonumber\\
       &=-v_3^{-1}v_{2,3}v_4v_1^2v_{1,5}\beta,\label{42finalu_4}\\
    u_5&=-(tv_2v_3v_4-v_{1,2}v_{4,5})v_1v_{2,3}\beta\nonumber\\
       &=-(v_2v_4+v_{1,2}(v_{1,2}+v_3))v_1v_{2,3}\beta\nonumber\\
       &=-v_3^{-1}(v_3v_2v_4-v_2v_4(v_{2,3}+v_{3,4}))v_1v_{2,3}\beta\nonumber\\
       &=-v_3^{-1}v_2v_4v_{1,5}v_1v_{2,3}\beta\label{42finalu_5}.
 \end{align}
 Let $r=-v_3^{-1}v_1v_4v_{2,3}v_{1,5}\beta$. From (\ref{42finalu_1}--\ref{42finalu_5}) we see
 \begin{align}
    u_1=v_3r,\,\,u_2=v_4r,\,\,u_3=v_5r,\,\,u_4=v_1r,\,\,u_5=v_2r,
 \end{align}
 and thus, we have the following lemma.

 \begin{lemma}\label{v_finial}
    In $\Lambda_{5,q}$ there is a cycle of type $(u_1,v_1,\ldots,u_5,v_5)$ with
    \begin{align}\label{condition}
       v_{2,3}v_{1,2}v_{1,5}(v_{2,3}-v_5)\neq0,
    \end{align}
    if and only if there are $b,c,d,r\in\mathbb{F}_q^{\ast}$ with
    \begin{align}
       (b+c)(c+d)(b+c+d)\neq0
    \end{align}
    such that
    \begin{align}\label{sufficient2}
       f_{b,c,d}(x)=cx^2+c(2b+c)x+b(c+d)(b+c+d)
    \end{align}
    is reducible in $\mathbb{F}_q$ and, for $a\in\mathbb{F}_q^{\ast}\backslash \{-b,-2b-2c-d\}$ with $f_{b,c,d}(a)=0$,
    \begin{gather}
       v_1=a,\,\,v_2=b,\,\,v_3=c,\,\,v_4=d,\,\,v_5=-(a+b+c+d),\label{5qv}\\
       u_1=cr,\,\,u_2=dr,\,\,u_3=-(a+b+c+d)r,\,\,u_4=ar,\,\,u_5=br.\label{5qu}
    \end{gather}
 \end{lemma}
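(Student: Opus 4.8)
The forward direction is essentially a repackaging of the computations preceding the lemma. The hypothesis (\ref{condition}) is exactly the standing assumption of the final sub-case of Section~4.2: $v_{2,3}v_{1,5}\neq0$ puts us in this subsection, $v_{1,2}\neq0$ rules out Lemma~\ref{v_12eq0}, and $v_{2,3}-v_5\neq0$ rules out Lemma~\ref{v_23eqv_5}. Under these, the analysis following (\ref{1t}) forces $t=v_3^{-1}$ and yields the relation (\ref{relav}), while (\ref{42finalu_1}--\ref{42finalu_5}) give $u_k=v_{k+2}r$ (indices mod $5$) with $r=-v_3^{-1}v_1v_4v_{2,3}v_{1,5}\beta$. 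Setting $a=v_1,b=v_2,c=v_3,d=v_4$ and writing $v_5=-(a+b+c+d)$ via (\ref{sumv1-5eq0}), I would verify by expanding in $a$ that (\ref{relav}) is precisely $f_{b,c,d}(a)=0$, and then read off the side conditions: $b+c=v_{2,3}$ and $b+c+d=-v_{1,5}$ are nonzero by (\ref{condition}), $c+d=v_{3,4}\neq0$ was established just before (\ref{relav}), $a\neq0$, $a\neq-b$ (since $v_{1,2}\neq0$), and $a\neq-2b-2c-d$ (since $a+2b+2c+d=v_{2,3}-v_5\neq0$). As $a\in\mathbb{F}_q$ is a root, $f_{b,c,d}$ is reducible.

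For the converse I would mimic the proofs of Lemmas~\ref{v_23eqv_5} and~\ref{v_15eq0}. Given $b,c,d,r$ and a root $a$ as in the statement, define $v_k$ by (\ref{5qv}) and $u_k$ by (\ref{5qu}), and set $t=v_3^{-1}$ and $\beta=-rv_3(v_1v_4v_{2,3}v_{1,5})^{-1}$. The first task is to check that this choice makes (\ref{42u_3}--\ref{42u_1}) evaluate to the values in (\ref{5qu}); this is exactly the chain (\ref{42finalu_1}--\ref{42finalu_5}), which simplifies (\ref{42u_3}--\ref{42u_1}) to $u_k=v_{k+2}r$ using $t=v_3^{-1}$ and (\ref{relav}). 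The second task is to confirm that $t=v_3^{-1}$ satisfies both (\ref{1t^2}) and (\ref{3t^2}). For (\ref{1t^2}), substituting $t=v_3^{-1}$ and clearing $v_3$ collapses it to $c(a+b)(a+b+c)+bd(b+2c+d)=0$, which is exactly $f_{b,c,d}(a)$; I would reduce the substitution into (\ref{3t^2}) to the same vanishing, so both quadratics hold.

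It then remains to argue that (\ref{42u_3}--\ref{42u_1}) together with (\ref{1t^2}) and (\ref{3t^2}) recover all the cycle conditions (\ref{sumv1-5eq0}--\ref{5l_5^6}). Here $\sum_k v_k=0$ and $\sum_k u_k=0$ hold by construction; (\ref{42u_3}) and (\ref{42u_4}) give (\ref{y3u3}) identically, and (\ref{1t^2}) upgrades the pair to (\ref{y2u2}) and (\ref{y4u4}); (\ref{42u_5}) is the defining relation (\ref{5l_1^6}) and (\ref{42u_1}) is (\ref{5l_0^6}); since the resulting $(y_2u_2,y_3u_3,y_4u_4)$ is the $t$-multiple of the cross product of the two coefficient vectors, it is orthogonal to both, which are precisely the combination $y_5(\ref{5l_1^6})-(\ref{5l_3^6})$ and (\ref{5l_5^6}), so (\ref{5l_3^6}) and (\ref{5l_5^6}) follow; and (\ref{3t^2}) is (\ref{5l_4^6}) through (\ref{transl_4^6}). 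Finally I would verify (\ref{condition}): the factors $v_{2,3}=b+c$, $v_{1,5}=-(b+c+d)$ are nonzero by hypothesis, $v_{1,2}=a+b\neq0$ and $v_{2,3}-v_5=a+2b+2c+d\neq0$ from the excluded values, and $v_5\neq0$ because $f_{b,c,d}(-(b+c+d))=d(b+c)(b+c+d)\neq0$ shows $a\neq-(b+c+d)$, so the tuple indeed lies over $\mathbb{F}_q^*$.

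I expect the main obstacle to be the reverse reduction of (\ref{3t^2}) at $t=v_3^{-1}$ to $f_{b,c,d}(a)=0$: unlike (\ref{1t^2}), it is a substantially longer polynomial identity, and one must cancel a common factor while checking it is nonzero under the standing hypotheses. A secondary but essential point is the bookkeeping showing that every implication in the forward derivation of Section~4.2 is reversible under the nonvanishing assumptions, so that imposing only (\ref{1t^2}), (\ref{3t^2}) and the formulas (\ref{42u_3}--\ref{42u_1}) loses none of the cycle conditions.
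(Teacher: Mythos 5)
Your proposal is correct and follows essentially the same route as the paper, whose entire proof of this lemma is to invoke the computation preceding it: your forward direction retraces that computation (and the identification of (\ref{relav}) with $f_{b,c,d}(a)=0$ is exactly right), while your converse spells out the reversibility that the paper leaves implicit. The one step you flag as an obstacle does go through: multiplying (\ref{3t^2}) by $v_3$ and substituting $t=v_3^{-1}$ gives $f_{b,c,d}(a)\cdot a(a+b+d)$, which vanishes whenever $f_{b,c,d}(a)=0$, so no nonvanishing of the cofactor is needed.
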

 \begin{proof}
   The proof follows clearly from the argument preceding this lemma.
 \end{proof}

\subsection{Girth and Girth Cycles of $\Lambda_{5,q}$}
 \begin{theorem}\label{cyclein5q}
    In $\Lambda_{5,q}$ there is a cycle of type $(u_1,v_1,\ldots,u_5,v_5)$ if and only if (\ref{5qv}) and (\ref{5qu}) are valid for some $a,b,c,d,r\in\mathbb{F}_q^{\ast}$ with $f_{b,c,d}(a)=0$ and $a+b+c+d\neq0$.
 \end{theorem}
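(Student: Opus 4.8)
The plan is to repackage the five lemmas of this section into one uniform parametrization and then establish the converse by direct substitution. The cleanest framing is to observe that every one of the explicit formulas in Lemmas~\ref{v_23eq0}--\ref{v_finial} is equivalent to a single uniform assertion: writing $(a,b,c,d)=(v_1,v_2,v_3,v_4)$ and $r=u_4/v_1$, a cycle of type $(u_1,v_1,\ldots,u_5,v_5)$ satisfies $v_5=-(a+b+c+d)$, the cyclic relation $u_i=v_{i+2}\,r$ (indices read modulo $5$, so $u_4=v_1 r$, $u_5=v_2 r$), together with $a+b+c+d\neq0$ and $f_{b,c,d}(a)=0$. This is exactly (\ref{5qv})--(\ref{5qu}). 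For the \emph{only if} direction I would therefore note that the quantities $v_2+v_3$, $v_1+v_5$, $v_1+v_2$ and $v_5-(v_2+v_3)$ induce an exhaustive case split that routes any cycle into one of the five lemmas, and in each case verify the short substitution giving $u_i=v_{i+2}r$ and $f_{b,c,d}(a)=0$; for example $v_2+v_3=0$ gives $f_{-c,c,d}(a)=c(a+d)(a-c-d)$, whose only admissible root (the other forcing $v_5=0$) is $a=c+d$, matching Lemma~\ref{v_23eq0}.

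For the \emph{if} direction I would take (\ref{5qv})--(\ref{5qu}) with $f_{b,c,d}(a)=0$ and $a+b+c+d\neq0$ and verify the defining system (\ref{sumv1-5eq0})--(\ref{5l_5^6}) outright. The cyclic shape $u_i=v_{i+2}r$ with $\sum_i v_i=0$ collapses (\ref{sumv1-5eq0}), (\ref{5l_0^6}) and (\ref{5l_1^6}) at once; indeed the left side of (\ref{5l_1^6}) telescopes to $r v_1\sum_{i=1}^5 v_i=0$. The three remaining equations carry all the content, and the crux I expect to drive the proof is that each is a scalar multiple of the one quadratic $f_{b,c,d}(a)=ca^2+c(2b+c)a+b(c+d)(b+c+d)$. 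After substituting $v_5=-(a+b+c+d)$ and simplifying I would exhibit $l_3^{(6)}=r\,f_{b,c,d}(a)$, $l_4^{(6)}=r^2 f_{b,c,d}(a)$ and $l_5^{(6)}=r^2(v_1+v_2)f_{b,c,d}(a)$, so that $f_{b,c,d}(a)=0$ annihilates all three simultaneously. Nonvanishing of every $u_i$ and $v_i$ is immediate from $a,b,c,d,r\in\mathbb{F}_q^{\ast}$ together with $a+b+c+d\neq0$ (which supplies $v_5,u_3\neq0$), so the first ten vertices of $\Gamma$ form a backtrackless circuit of length $10$; since $g(\Lambda_{5,q})\geq10$ by Theorem~\ref{lamda34}(g), this circuit is automatically a cycle.

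The main obstacle, and where I would concentrate the care, is verifying that $l_3^{(6)}$, $l_4^{(6)}$ and $l_5^{(6)}$ all reduce to multiples of the \emph{same} polynomial $f_{b,c,d}(a)$: this is invisible from the raw sums in (\ref{5l_3^6})--(\ref{5l_5^6}) and hinges entirely on the cyclic relation $u_i=v_{i+2}r$. The efficient route is to express every partial sum $y_k$ and $x_k$ through $v_1,\ldots,v_4$ using $v_5=-(a+b+c+d)$, expand, and match the eight monomials of $f_{b,c,d}(a)$ term by term. A lighter, bookkeeping-level obstacle on the forward side is confirming the case split is genuinely exhaustive -- in particular that Lemma~\ref{v_23eqv_5}'s hypothesis $v_5=v_2+v_3$ already forces $v_2+v_3\neq0$, so no case is dropped -- and that each relabeling stays inside the admissible locus $a+b+c+d\neq0$.
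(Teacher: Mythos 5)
Your proposal is correct, and its ``if'' direction takes a genuinely different route from the paper's. For the ``only if'' direction you do exactly what the paper does: both arguments simply route a cycle through the exhaustive case split underlying Lemmas~\ref{v_23eq0}--\ref{v_finial} (governed by $v_{2,3}$, $v_{1,5}$, $v_{1,2}$, $v_{2,3}-v_5$) and observe that each lemma's explicit output is an instance of (\ref{5qv})--(\ref{5qu}) with $f_{b,c,d}(a)=0$; your sample check for $v_2+v_3=0$ matches the paper's computation $f_{b,c,d}(a)=c(a+d)(a-c-d)$ when $b+c=0$. For the ``if'' direction, however, the paper does \emph{not} verify the system directly: it runs the case analysis in reverse, showing that each degenerate configuration ($b+c=0$, $b+c+d=0$, $a+b=0$, $c+d=0$, or none of these) forces $a$ into the admissible locus of exactly one of the five lemmas (e.g.\ computing $f_{b,c,d}(-2b-2c-d)=(c+d)(b+c)(b+2c+d)$ to rule out $a=-2b-2c-d$ in the generic case) and then invokes that lemma. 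Your alternative is to substitute the cyclic relation $u_i=v_{i+2}r$ directly into (\ref{sumv1-5eq0})--(\ref{5l_5^6}) and show that everything collapses to multiples of the single quadratic $f_{b,c,d}(a)$. I checked the three identities that carry the weight of your argument: with $v_5=-(a+b+c+d)$ one indeed gets $l_3^{(6)}=r\,f_{b,c,d}(a)$, $l_4^{(6)}=-r^2 f_{b,c,d}(a)$ and $l_5^{(6)}=-r^2(a+b)\,f_{b,c,d}(a)$ (your versions differ only by immaterial signs), while (\ref{5l_1^6}) telescopes to $r\,v_1\sum_i v_i=0$ as you say. This buys a cleaner, case-free sufficiency proof that makes transparent \emph{why} the single condition $f_{b,c,d}(a)=0$ suffices, at the cost of three polynomial expansions; the paper's version avoids those expansions but needs the boundary bookkeeping to confirm that no admissible $(a,b,c,d)$ falls outside the hypotheses of all five lemmas.
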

 \begin{proof}
    The only-if-part can be checked easily according to Lemmas \ref{v_23eq0}--\ref{v_finial}.

    To show the if-part, we assume that (\ref{5qv}) and (\ref{5qu}) are valid for some $a,b,c,d,r\in\mathbb{F}_q^{\ast}$ with $f_{b,c,d}(a)=0$ and $a+b+c+d\neq0$.

    If $b+c=0$, from $a+d=a+b+c+d\neq0$ and
    $$f_{b,c,d}(a)=ca^2-c^2a-c(c+d)d=c(a+d)(a-c-d)=0$$
    we see $a=c+d\neq0$, $c+2d=a+d\neq0$ and thus according to Lemma \ref{v_23eq0} in $\Lambda_{5,q}$ there is a cycle of type $(u_1,v_1,\ldots,u_5,v_5)$.

    If $b+c+d=0$, from
    $$f_{b,c,d}(a)=ca^2+c(2b+c)a=ca(a+2b+c)=0$$
    we see $a=-2b-c\neq0$, $b+c=-d\neq0$ and thus according to Lemma \ref{v_15eq0} in $\Lambda_{5,q}$ there is a cycle of type $(u_1,v_1,\ldots,u_5,v_5)$.

    If $a+b=0$, from
    $$f_{b,c,d}(a)=cb^2-c(2b+c)b+b(c+d)(b+c+d)=bd(b+2c+d)=0$$
    we see $d=-b-2c\neq0$, $b+c=-c-d=-(a+b+c+d)\neq0$ and thus according to Lemma \ref{v_12eq0} in $\Lambda_{5,q}$ there is a cycle of type $(u_1,v_1,\ldots,u_5,v_5)$.

    If $c+d=0$, from
    $$f_{b,c,d}(a)=ca^2+c(2b+c)a=ca(a+2b+c)=0$$
    we see $a=-2b-c\neq0$, $b+c=-a-b=-(a+b+c+d)\neq0$  and thus according to Lemma \ref{v_23eqv_5} in $\Lambda_{5,q}$ there is a cycle of type $(u_1,v_1,\ldots,u_5,v_5)$.

    Assume now $(b+c)(b+c+d)(a+b)(c+d)\neq0$. If $b+2c+d=0$, we have $a+2b+2c+d=a+b\neq0$. If $b+2c+d\neq0$, from $f_{b,c,d}(a)=0$ and
    \begin{align*}
       &f_{b,c,d}(-2b-2c-d)\\
       =&c(2b+2c+d)^2-c(2b+c)(2b+2c+d)+b(c+d)(b+c+d)\\
       =&(c+d)(b+c)(b+2c+d)\neq0
    \end{align*}
    we also have $a+2b+2c+d\neq0$. Hence, according to Lemma \ref{v_finial} in $\Lambda_{5,q}$ there is a cycle of type $(u_1,v_1,\ldots,u_5,v_5)$.
 \end{proof}

 \begin{corollary}
   \begin{description}
      \item[(a)] For $q>3$, $g(\Lambda_{5,q})=10$.
      \item[(b)] $g(\Lambda_{5,3})\geq12$.
   \end{description}
 \end{corollary}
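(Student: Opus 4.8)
The plan is to derive both parts of the corollary directly from Theorem~\ref{cyclein5q}, which characterizes the existence of a cycle of type $(u_1,v_1,\ldots,u_5,v_5)$ in $\Lambda_{5,q}$ in terms of the solvability of the quadratic $f_{b,c,d}$ over $\mathbb{F}_q^*$. Since we already know from Theorem~\ref{lamda34}(g) that $g(\Lambda_{5,q})\geq 10$ for all $q\geq 3$, the whole question reduces to decting whether a cycle of length $10$ (i.e.\ a cycle of some type $\epsilon$) actually exists. Thus establishing (a) amounts to exhibiting, for each $q>3$, at least one valid choice of parameters $a,b,c,d,r\in\mathbb{F}_q^*$ satisfying the conditions of Theorem~\ref{cyclein5q}, while (b) amounts to proving that no such choice exists when $q=3$.

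For part~(a) I would construct an explicit solution. The cleanest route is to force $f_{b,c,d}$ to factor by picking parameters that make a known root lie in $\mathbb{F}_q^*$. For instance, using one of the degenerate branches already analyzed in the lemmas (say the $c+d=0$ branch, where $f_{b,c,d}(a)=ca(a+2b+c)$ factors automatically), I can take $a=-2b-c$ and then only need to verify the genuine nondegeneracy constraints: $a,b,c,d\in\mathbb{F}_q^*$, $a+b+c+d\neq 0$, and the various sum conditions recorded in Lemma~\ref{v_23eqv_5}. Since $q>3$ gives at least three nonzero field elements to play with, I expect to find valid $b,c$ (with $c\notin\{-b,-2b\}$, which is the operative restriction) precisely when $q\geq 4$; the exclusion of $q=3$ here corresponds to the fact that $\mathbb{F}_3^*=\{1,2\}$ is too small to avoid all the forbidden coincidences. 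Combining such an explicit cycle with the lower bound $g(\Lambda_{5,q})\geq 10$ yields $g(\Lambda_{5,q})=10$.

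For part~(b) I would argue by exhaustion over $\mathbb{F}_3^*=\{1,2\}$. By Theorem~\ref{cyclein5q}, a $10$-cycle in $\Lambda_{5,3}$ requires $a,b,c,d\in\{1,2\}$ with $f_{b,c,d}(a)=0$ and $a+b+c+d\neq 0$ in $\mathbb{F}_3$. There are only $2^3=8$ choices of $(b,c,d)$, and for each I would check whether either $a=1$ or $a=2$ is a root of $f_{b,c,d}$ while simultaneously respecting $a+b+c+d\not\equiv 0\pmod 3$. The expectation is that every candidate fails one of the two conditions, so no cycle of length $10$ exists; together with $g(\Lambda_{5,3})\geq 10$ this forces $g(\Lambda_{5,3})\geq 12$.

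The main obstacle I anticipate is not conceptual but bookkeeping: in part~(a) I must be careful that the explicit parameters I select satisfy \emph{all} the side conditions of whichever lemma/branch I invoke (the nonvanishing of $r$ via the displayed formula for $r$, and the specific exclusions like $c\notin\{-b,-2b\}$), so that the tuple genuinely yields a backtrackless circuit rather than a walk that self-intersects or collapses. The cleanest presentation will likely just display one concrete type $\epsilon$ over a generic $\mathbb{F}_q$ with $q>3$ and verify the finitely many inequalities, rather than appealing to a degenerate branch; the finite check for $q=3$ in part~(b) is then purely mechanical.
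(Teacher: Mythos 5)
Your proposal is correct and follows essentially the same route as the paper: both parts reduce to the characterization of $10$-cycles established in Section~4, with (a) settled by exhibiting an explicit admissible parameter choice from one of the degenerate branches (the paper uses Lemma~\ref{v_23eq0} with $d=-1$, $r=1$ and any $c\notin\{1,2\}$, while you invoke the $c+d=0$ branch of Lemma~\ref{v_23eqv_5}; either works for all $q>3$) and (b) by a finite check over $\mathbb{F}_3^*$. The only cosmetic difference is that for (b) the paper observes in one line that $\{-b,-2b\}=\mathbb{F}_3^*$ and $(b+c)(c+d)(b+c+d)=0$ always hold, so every lemma's side condition fails, whereas you propose the equivalent $8$-case exhaustion of $f_{b,c,d}(a)=0$ together with $a+b+c+d\neq0$ via Theorem~\ref{cyclein5q}.
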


 \begin{proof}
     \textbf{(a)}
     If $q$ is a prime power greater than 3, then for $c\in\mathbb{F}_q^{\ast}\backslash \{1,2\}$, according to Lemma \ref{v_23eq0} the graph $\Lambda_{5,q}$ has a cycle of type
          $$(c,c-1,-1,-c,2-c,c,c-1,-1,-c,2-c).$$
     Hence, we have $g(\Lambda_{5,q})=10$.

     \textbf{(b)}
     For any $b,c,d\in\mathbb{F}_3^{\ast}$, we have $\{b,2b\}=\mathbb{F}_3^{\ast}$ and $(b+c)(c+d)(b+c+d)=0$, and thus from Lemmas~\ref{v_23eq0}--\ref{v_finial} we see $\Lambda_{5,3}$ has no cycle of length 10, this means $g(\Lambda_{5,3})\geq12$.
 \end{proof}

\section{Girth Cycles of $\Lambda_{k,3}$, $4\leq k\leq 8$}
 We note that $\mathbb{F}_3^{\ast}=\{1,2\}$ and $a^2=1$ for any $a\in\mathbb{F}_3^{\ast}$.

 At first we conclude that in $\Lambda_{3,3}$ there is no cycle of length 10 and thus $g(\Lambda_{k,3})\geq12$ for $k\geq4$. Assume in contrast that $\Lambda_{3,3}$ has a cycle of type $(u_1,v_1,\ldots,u_5,v_5)$. Then we have (\ref{sumv1-5eq0}--\ref{5l_3^6}). Since $y_2=v_1$ and  $y_5=-v_5$ are nonzero, from (\ref{5l_0^6}--\ref{5l_3^6}) we see $y_3y_4=0$.

 If $y_3=0$, then $y_4=v_3\neq0$ and thus from (\ref{5l_3^6}) we have $u_2+u_4+u_5=0$, which implies $u_2=u_4=u_5$. Hence, from (\ref{5l_1^6}) we see $y_2=y_4=y_5$, contradicts $y_5-y_4=v_4\neq0$.

 If $y_4=0$, then $y_3=-v_3\neq0$ and thus from (\ref{5l_3^6}) we have $u_2+u_3+u_5=0$, which implies $u_2=u_3=u_5$. Hence, from (\ref{5l_1^6}) we see $y_2=y_3=y_5$, contradicts $y_3-y_2=v_2\neq0$.

 Now we consider to determine all the cycles of length 12 in $\Lambda_{3,3}$. Assume in $\Lambda_{3,3}$ there is a cycle of type $\epsilon=(u_1,v_1,\ldots,u_6,v_6)$. Then, according to Theorem \ref{path} we have
 \begin{gather}
    v_1+v_2+v_3+v_4+v_5+v_6=0,\label{sumv16eq0}\\
    u_1+u_2+u_3+u_4+u_5+u_6=0,\label{6l_0^7}\\
    y_2u_2+y_3u_3+y_4u_4+y_5u_5+y_6u_6=0,\label{6l_1^7}\\
    y_2^2u_2+y_3^2u_3+y_4^2u_4+y_5^2u_5+y_6^2u_6=0.\label{6l_3^7}
 \end{gather}
 From $y_2y_6=-v_1v_6\neq0$, (\ref{6l_0^7}) and (\ref{6l_3^7}), we see $y_3y_4y_5=0$. We note that
 \begin{align}\label{y_ineqy_i+1}
   y_i\neq y_{i+1},\,1\leq i\leq 5.
 \end{align}

 If two of $y_3, y_4, y_5$ are zero, then we have $y_3=y_5=0$ and $y_4\neq0$. From (\ref{6l_3^7}) we see $u_2+u_4+u_6=0$, i.e. $u_2=u_4=u_6$. Then, from (\ref{6l_1^7}) we have $y_2+y_4+y_6=0$, i.e. $y_2=y_4=y_6$. From $u_2+u_4+u_6=0$ and (\ref{6l_0^7}) we have $u_1+u_3+u_5=0$, i.e. $u_1=u_3=u_5$. Hence, there are $a,b,r\in\mathbb{F}_3^{\ast}$ such that
 \begin{align}\label{y_3,5=0}
    \epsilon=(a,r,b,-r,a,r,b,-r,a,r,b,-r).
 \end{align}

 If $y_3=0$ and $y_4y_5\neq0$, then we have $u_2+u_4+u_5+u_6=u_1+u_3=0$. Furthermore, from (\ref{sumv16eq0}) and (\ref{y_ineqy_i+1}) we see $v_1=-v_2$, $v_3=v_4=-v_5=-v_6$, $y_2=v_1$ and $y_4 =-y_5=y_6=v_3$. Then, from (\ref{6l_1^7}) we have $v_1u_2 +v_3(u_4-u_5+u_6)=0$ and thus from $u_2+u_4+u_5+u_6=0$ we see $(v_1+v_3)u_5=(v_3-v_1)(u_4+u_6)$, which implies $v_1+v_3=u_4+u_6=0$. Hence, there are $a,b,c,r\in\mathbb{F}_3^{\ast}$ such that
 \begin{align}\label{y_3=0}
    \epsilon=(a,r,b,-r,-a,-r,c,-r,-b,r,-c,r).
 \end{align}

 If $y_5=0$ and $y_3y_4\neq0$, then we have $u_2+u_3+u_4+u_6=u_1+u_5=0$. Furthermore, from (\ref{sumv16eq0}) and (\ref{y_ineqy_i+1}) we see $v_1=v_2=-v_3=-v_4$, $v_5=-v_6$, $y_2=-y_3=y_4=v_1$ and $y_6=v_5$. Then, from (\ref{6l_1^7}) we have $v_1(u_2-u_3+u_4)+v_5u_6=0$ and thus from $u_2+u_3+u_4+u_6=0$ we see $(v_1+v_5)u_3=(v_1-v_5)(u_2+u_4)$, which implies $v_1+v_5=u_2+u_4=0$. Hence, there are $a,b,c,r\in\mathbb{F}_3^{\ast}$ such that
 \begin{align}\label{y_5=0}
    \epsilon=(a,r,b,r,c,-r,-b,-r,-a,-r,-c,r).
 \end{align}

 Assume now $y_4=0$ and $y_3y_5\neq0$. Clearly, $u_2+u_3+u_5+u_6=0$ and $u_1+u_4=0$. From (\ref{sumv16eq0}) and (\ref{y_ineqy_i+1}) we have $v_1=v_2=v_3$, $v_4=v_5=v_6$, $y_2=-y_3=v_1$ and $y_5=-y_6=v_4$.

 If $v_1\neq v_4$, from (\ref{6l_1^7}) we have $u_2-u_3-u_5+u_6=0$ and thus from $u_2+u_3+u_5+u_6=0$ we see $u_2+u_6=u_3+u_5=0$. Hence, there are $a,b,c,r\in\mathbb{F}_3^{\ast}$ such that
 \begin{align}\label{y_4=01}
    \epsilon=(a,r,b,r,c,r,-a,-r,-c,-r,-b,-r).
 \end{align}

 If $v_1=v_4$, from (\ref{6l_1^7}) we have $u_2-u_3+u_5-u_6=0$ and thus from $u_2+u_3+u_5+u_6=0$ we see $u_2+u_5=u_3+u_6=0$. Hence, there are $a,b,c,r\in\mathbb{F}_3^{\ast}$ such that
 \begin{align}\label{y_4=02}
    \epsilon=(a,r,b,r,c,r,-a,r,-b,r,-c,r).
 \end{align}

 By now, we have determined all cycles of length 12 in $\Lambda_{3,3}$.

 \begin{lemma}\label{lamda33length10}
    For any tuple $\epsilon=(u_1,v_1,\ldots,u_6,v_6)$ over $\mathbb{F}_3^{\ast}$, in $\Lambda_{3,3}$ there is a cycle of type $\epsilon$ if and only if $\epsilon$ is of form among (\ref{y_3,5=0}--\ref{y_4=02}).
 \end{lemma}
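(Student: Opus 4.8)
The plan is to characterize all length-12 cycles in $\Lambda_{3,3}$ by leveraging the structure already established. Since $\Lambda_{3,q}$ has girth at least $7$ and (by the preceding argument) $\Lambda_{3,3}$ contains no cycle of length $10$, any backtrackless circuit of length $12$ is automatically a genuine cycle. Thus it suffices to translate the adjacency conditions of Theorem~\ref{path} into the four equations \eqref{sumv16eq0}--\eqref{6l_3^7} for the colors, and then solve this system over $\mathbb{F}_3^*$. The key simplification is that over $\mathbb{F}_3$ every nonzero square equals $1$, so the quadratic equation \eqref{6l_3^7} collapses and becomes a strong combinatorial constraint on which partial sums $y_i$ vanish.

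First I would establish the pivotal observation that $y_3 y_4 y_5 = 0$: since $y_2 = v_1$ and $y_6 = -v_6$ are nonzero, equations \eqref{6l_0^7} and \eqref{6l_3^7} together force one of the interior partial sums to vanish (because squaring is trivial on $\mathbb{F}_3^*$, \eqref{6l_3^7} becomes a linear relation among only those $u_k$ with $y_k \neq 0$, and combined with \eqref{6l_0^7} this is untenable unless some $y_k = 0$). I would also record the backtracklessness constraint \eqref{y_ineqy_i+1}, namely $y_i \neq y_{i+1}$, which is equivalent to $v_i \neq 0$ and is what prevents the walk from retracing an edge. These two facts—the vanishing of an interior partial sum and the nonvanishing of each consecutive difference—are the engine that drives the entire case analysis.

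Next I would perform the case split according to which of $y_3, y_4, y_5$ vanish, exactly as the preamble to the lemma does. The cases are: (i) two of them vanish, forcing $y_3 = y_5 = 0$ and yielding form \eqref{y_3,5=0}; (ii) exactly $y_3 = 0$, giving \eqref{y_3=0}; (iii) exactly $y_5 = 0$, giving \eqref{y_5=0}; and (iv) exactly $y_4 = 0$, which further subdivides according to whether $v_1 = v_4$, producing \eqref{y_4=01} and \eqml{y_4=02}. In each case the strategy is identical: use the vanishing partial sum to reduce \eqref{6l_3^7} to a sum of three $u_k$ equalling zero (hence all three equal, since they lie in $\mathbb{F}_3^*$ and $1+1+1=0$), then feed this back into \eqref{6l_1^7} to pin down the remaining color sums, and finally use \eqref{sumv16eq0} with the constraints \eqref{y_ineqy_i+1} to determine the $v_i$ pattern. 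Each case terminates by exhibiting three free parameters $a,b,c \in \mathbb{F}_3^*$ (or fewer) and the common increment $r$.

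The main obstacle I anticipate is case~(iv), the subcase $y_4 = 0$, where the sign pattern of the $u_k$ is not immediately forced and one must carefully exploit the distinction $v_1 = v_4$ versus $v_1 \neq v_4$ to separate \eqref{y_4=01} from \eqref{y_4=02}; here the interplay between \eqref{6l_1^7} and the relation $u_2+u_3+u_5+u_6=0$ requires solving a small linear system whose solvability depends on whether $v_1 + v_4$ vanishes. The forward direction (that each listed form genuinely yields a cycle) is the easier half: for each family one substitutes the explicit colors back into \eqref{sumv16eq0}--\eqref{6l_3^7}, checks they hold (using $a^2 = b^2 = c^2 = 1$), and invokes the girth bound $g(\Lambda_{3,3}) \geq 7$ together with the absence of length-$10$ cycles to conclude the backtrackless circuit is in fact a cycle of distinct vertices. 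I would present the verification compactly rather than writing out all twelve vertex vectors for each case.
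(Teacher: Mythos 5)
Your proposal is correct and follows essentially the same route as the paper: reduce to the color equations \eqref{sumv16eq0}--\eqref{6l_3^7} via Theorem~\ref{path}, use $a^2=1$ on $\mathbb{F}_3^{\ast}$ together with $y_2y_6\neq0$ to force $y_3y_4y_5=0$, and then case-split on which interior partial sums vanish, using $1+1+1=0$ to equate triples of $u_k$'s and the constraints $y_i\neq y_{i+1}$ to pin down the $v_i$ pattern. The only cosmetic difference is your extra appeal to the absence of $10$-cycles, which is unnecessary since $12<2g(\Lambda_{3,3})=16$ already guarantees every backtrackless circuit of length $12$ is a cycle.
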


 Now we consider to determine all the cycles of length 12 in $\Lambda_{4,3}$. According to Theorem \ref{path} and Lemma \ref{lamda33length10} we see that, in $\Lambda_{4,3}$ there is a cycle of type $\epsilon=(u_1,v_1,\ldots,u_6,v_6)$ if and only if $\epsilon$ is of form among (\ref{y_3,5=0}--\ref{y_4=02}) and satisfies $\Delta_4(\epsilon)=0$, where $\Delta_4(\epsilon)=x_2^2v_1+x_3^2v_2+x_4^2v_3+x_5^2v_4+x_6^2v_5$.

 If $\epsilon$ is of form (\ref{y_3,5=0}), we have
 $$\Delta_4(\epsilon)=r(a^2-(a+b)^2+(2a+b)^2-(2a+2b)^2+(2b)^2)=0.$$

 If $\epsilon$ is of form (\ref{y_3=0}), from
 $$\Delta_4(\epsilon)=r(a^2-(a+b)^2-b^2-(b+c)^2+c^2)=rb(a+c)$$
 we see $\Delta_4(\epsilon)=0$ is valid if and only if $a+c=0$. Hence, we have
 \begin{align}\label{y_3=0k4}
    \epsilon=(a,r,b,-r,-a,-r,-a,-r,-b,r,a,r).
 \end{align}

 If $\epsilon$ is of form (\ref{y_5=0}), from
 $$\Delta_4(\epsilon)=r(a^2+(a+b)^2-(a+b+c)^2-(a+c)^2-c^2)=rc(b-a)$$
 we see $\Delta_4(\epsilon)=0$ is valid if and only if $a=b$. Hence, we have
 \begin{align}\label{y_5=0k4}
    \epsilon=(a,r,a,r,c,-r,-a,-r,-a,-r,-c,r).
 \end{align}

 If $\epsilon$ is of form (\ref{y_4=01}), from
 $$\Delta_4(\epsilon)=r(a^2+(a+b)^2+(a+b+c)^2-(b+c)^2-b^2)=ra(b-c)$$
 we see $\Delta_4(\epsilon)=0$ is valid if and only if $b=c$. Hence, we have
 \begin{align}\label{y_4=01k4}
    \epsilon=(a,r,b,r,b,r,-a,-r,-b,-r,-b,-r).
 \end{align}

 If $\epsilon$ is of form (\ref{y_4=02}), from
 $$\Delta_4(\epsilon)=r(a^2+(a+b)^2+(a+b+c)^2+(b+c)^2+c^2)=r(ab+bc-ac)$$
 we see $\Delta_4(\epsilon)=0$ is valid if and only if $ab+bc-ac=0$, which is equivalent to $ab=bc=-ac$, i.e. $a=c=-b$. Hence, we have
 \begin{align}\label{y_4=02k4}
    \epsilon=(a,r,-a,r,a,r,-a,r,a,r,-a,r).
 \end{align}

 Now, we can determine all the girth cycles of $\Lambda_{k,3}$ for $4\leq k\leq 8$.

 \begin{theorem}\label{lambda-k3}
   Let $\epsilon=(u_1,v_1,\ldots,u_6,v_6)$ be a tuple over $\mathbb{F}_3^{\ast}$.
   \begin{enumerate}
      \item $\Lambda_{4,3}$ has a cycle of type $\epsilon$ if and only if $\epsilon$ is of form (\ref{y_3,5=0}) or among (\ref{y_3=0k4}--\ref{y_4=02k4}).
      \item For $k=5,6$, $\Lambda_{k,3}$ has a cycle of type $\epsilon$ if and only if there are $a,b,c,d \in \mathbb{F}_3^{\ast}$ with $(a+b)(c+d)=0$ such that
      \begin{align}\label{cycle53}
      	\epsilon=(a,c,b,d,a,c,b,d,a,c,b,d).
      \end{align}
      \item $\Lambda_{7,3}$ has a cycle of type $\epsilon$ if and only if there are $a,b,c\in \mathbb{F}_3^{\ast}$ such that
      \begin{align}\label{cycle73}
      	\epsilon=(a,c,b,-c,a,c,b,-c,a,c,b,-c).
      \end{align}
      \item $\Lambda_{8,3}$ has a cycle of type $\epsilon$ if and only if there are $a,c\in \mathbb{F}_3^{\ast}$ such that
      \begin{align}\label{cycle83}
      	\epsilon=(a,c,-a,-c,a,c,-a,-c,a,c,-a,-c).
      \end{align}
      \item For $4\leq k\leq 8$, $g(\Lambda_{k,3})=12$.
      \item $g(\Lambda_{9,3})\geq14$.
   \end{enumerate}
 \end{theorem}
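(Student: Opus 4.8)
The plan is to handle all six parts by a single filtering mechanism built on Theorem~\ref{path}. A backtrackless circuit of type $\epsilon=(u_1,v_1,\ldots,u_6,v_6)$ closes into a $12$-cycle through the base edge exactly when the continuation of $\Gamma$ returns to the all-zero vertex, that is, $l_j^{(7)}=0$ for every coordinate $0\le j\le k$; since $g(\Lambda_{k,3})\ge k+4\ge 8$ for $k\ge 4$, a length-$12$ backtrackless circuit is automatically a cycle by the remark in the introduction, so no separate distinctness check is needed. By Theorem~\ref{path} at $i=6$ each $l_j^{(7)}$ is an explicit $\rho$-expression in the entries of $\epsilon$, and the coordinates $0\le j\le 3$ are precisely (\ref{sumv16eq0}--\ref{6l_3^7}), which by Lemma~\ref{lamda33length10} characterise the $12$-cycles of $\Lambda_{3,3}$ as the five families (\ref{y_3,5=0}--\ref{y_4=02}). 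Consequently a type $\epsilon$ yields a $12$-cycle in $\Lambda_{k,3}$ iff it lies in one of these families and in addition satisfies $l_4^{(7)}=\cdots=l_k^{(7)}=0$, so the admissible types form a decreasing chain as $k$ runs from $3$ to $8$, each stage cut out by one further scalar equation $l_k^{(7)}=0$.

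For part~1 the cut by $l_4^{(7)}=-\Delta_4(\epsilon)=0$ is exactly the computation in the paragraph preceding the theorem, which leaves (\ref{y_3,5=0}) intact and forces $a+c=0$, $a=b$, $b=c$, $a=c=-b$ on the other four families, producing (\ref{y_3=0k4}--\ref{y_4=02k4}). For parts~2--4 I would impose $l_5^{(7)}=0$, $l_6^{(7)}=0$, $l_7^{(7)}=0$, $l_8^{(7)}=0$ in succession on the surviving forms, simplifying each $\rho$-sum with the $\mathbb{F}_3^{\ast}$-identities $a^2=1$ and $c+d\ne 0\Leftrightarrow c=d$. The expected behaviour is that the two $4$-periodic forms (\ref{y_3,5=0}) and (\ref{y_4=02k4}) survive $l_5^{(7)}=0=l_6^{(7)}$ and together describe exactly (\ref{cycle53}) (they realise the cases $c+d=0$ and $a+b=0$ respectively, whose union is $(a+b)(c+d)=0$ over $\mathbb{F}_3$), while the three non-periodic forms (\ref{y_3=0k4}), (\ref{y_5=0k4}), (\ref{y_4=01k4}) are eliminated; the next cut $l_7^{(7)}=0$ forces $c+d=0$, giving (\ref{cycle73}), and $l_8^{(7)}=0$ additionally forces $a+b=0$, giving (\ref{cycle83}).

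For part~5 I would read off existence from the "if" directions: every family displayed in parts~1--4 is nonempty over $\mathbb{F}_3$ (e.g.\ $a=c=1$ in (\ref{cycle83})), so $\Lambda_{k,3}$ has a $12$-cycle for each $4\le k\le8$. The matching bound $g(\Lambda_{k,3})\ge12$ follows because an $8$- or $10$-cycle of $\Lambda_{k,3}$ would project coordinatewise to a backtrackless circuit, hence (being shorter than twice the girth) a cycle, of the same length in $\Lambda_{4,3}$ or $\Lambda_{3,3}$; but $\Lambda_{4,3}$ has no $8$-cycle (Theorem~\ref{lamda34}(f)) and $\Lambda_{3,3}$ has no $10$-cycle (established at the start of Section~5). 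Hence $g(\Lambda_{k,3})=12$. Part~6 is immediate from the general bound $g(\Lambda_{9,3})\ge 9+4=13$ together with the evenness of the girth of the bipartite graph $\Lambda_{9,3}$, so $g(\Lambda_{9,3})\ge14$; as a check one may also verify $l_9^{(7)}\ne0$ on the form (\ref{cycle83}), so that $\Lambda_{9,3}$ indeed carries no $12$-cycle.

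The step I expect to be the main obstacle is the bookkeeping in parts~2--4: for each candidate family one must expand a $\rho_s$ with eleven or twelve arguments from Theorem~\ref{path} and decide whether it vanishes identically or reduces to a nonzero monomial in the parameters. The delicate point is confirming that the three non-periodic families genuinely fail $l_5^{(7)}=0$ rather than surviving under some accidental relation in $\mathbb{F}_3$, and that no new families reappear at the higher indices $j=6,7,8$; it is precisely the collapse $a^2=1$ over $\mathbb{F}_3$ that keeps these finitely many verifications manageable.
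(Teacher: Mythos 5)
Your proposal follows essentially the same route as the paper: filter the five length-12 families of $\Lambda_{3,3}$ from Lemma~\ref{lamda33length10} through the successive conditions $l_4^{(7)}=\cdots=l_k^{(7)}=0$ computed via Theorem~\ref{path}, and your predicted outcomes at each stage (the two $4$-periodic families surviving $l_5^{(7)}=l_6^{(7)}=0$ to give (\ref{cycle53}), then $c+d=0$ at $k=7$ and $a+b=0$ at $k=8$, and $l_9^{(7)}\neq0$ for part~6) coincide exactly with the paper's explicit computations. The only difference is that you defer the evaluation of the $\rho$-sums, which the paper carries out directly.
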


 \begin{proof}
 	The first result follows from the argument preceding this theorem.
 	
 	To show the second result, we assume $\Lambda_{4,3}$ has a cycle of type $\epsilon$. By directly computing, one can get
 	\begin{align*}
 		l_5^{(7)}=-\sum_{2\leq r\leq s\leq5}y_ru_rv_sx_{s+1}=\begin{cases}
 			0,   & \text{if } \epsilon \text{ is of form (\ref{y_3,5=0}) or (\ref{y_4=02k4}),}\\
 			r^2ab,   & \text{if } \epsilon \text{ is of form (\ref{y_3=0k4}),}\\
 			r^2ac,   & \text{if } \epsilon \text{ is of form (\ref{y_5=0k4}),}\\
 			-r^2ab,    & \text{if } \epsilon \text{ is of form (\ref{y_4=01k4}).}\\
 		\end{cases}
 	\end{align*}
    Hence, $\Lambda_{5,3}$ has a cycle of type $\epsilon$ if and only if $\epsilon$ is of form (\ref{y_3,5=0}) or (\ref{y_4=02k4}), i.e. (\ref{cycle53}) is valid for some $a,b,c,d \in \mathbb{F}_3^{\ast}$ with $(a+b)(c+d)=0$. Furthermore, for such tuple $\epsilon$ we have
    $$l_6^{(7)}=-\sum_{1\leq s_1\leq t_1<s_2\leq t_2\leq6}u_{s_1}v_{t_1}u_{s_2}v_{t_2}=0,$$
    which means $\Lambda_{6,3}$ also has a cycle of type $\epsilon$.

    To show the third result, we assume $\Lambda_{6,3}$ has a cycle of type $\epsilon$. From the second result, we see $\epsilon$ is of form (\ref{cycle53}) for some $a,b,c,d \in \mathbb{F}_3^{\ast}$ with $(a+b)(c+d)=0$, then we have
    $$l_7^{(7)}=-\sum_{1\leq s_1<t_1\leq s<t_2\leq s_2\leq 6}v_{s_1}u_{t_1}v_su_{t_2}v_{s_2}=-ab(c+d).$$
    Hence, $\Lambda_{7,3}$ has a cycle of type $\epsilon$ if and only if $c+d=0$, i.e. (\ref{cycle73}) is valid for some $a,b,c\in \mathbb{F}_3^{\ast}$.

    To show the fourth result, we assume $\Lambda_{7,3}$ has a cycle of type $\epsilon$. From the third result, we see $\epsilon$ is of form (\ref{cycle73}) for some $a,b,c\in \mathbb{F}_3^{\ast}$, then
    we have
    $$l_8^{(7)}=\sum_{1\leq t_1\leq s_1<t\leq s_2<t_2\leq 6}u_{t_1}v_{s_1}u_tv_{s_2}u_{t_2}=-(a+b).$$
    Hence, $\Lambda_{8,3}$ has a cycle of type $\epsilon$ if and only if $a+b=0$, i.e. (\ref{cycle83}) is valid for some $a,c\in \mathbb{F}_3^{\ast}$.

    The fifth result is a direct corollary of the first four results.

    The last result is a corollary of the known bound given in \cite{Lazebnik95}. However, here we give a simple proof for it by using the fourth result.
    We assume $\Lambda_{8,3}$ has a cycle of type $\epsilon$, then $\epsilon$ is of form (\ref{cycle83}) and then from
    $$l_9^{(7)}=\sum_{1\leq s_1<t_1\leq s<t\leq s_2< t_2\leq 6}v_{s_1}u_{t_1}v_{s}u_{t}v_{s_2}u_{t_2}=-ac\neq 0$$
    we see there is no cycle of length 12 in $\Lambda_{9,3}$. Hence, we have $g(\Lambda_{9,3})\geq14$.
 \end{proof}

We note that $g(\Lambda_{5,3})=12$ has been pointed out in \cite{Fredi95} without proof.

\section{Concluding Remarks}

Note that one can also rewirte (\ref{4j+2}) and (\ref{4j+3}) without use of $v_i$, which is an information about the vertex $\langle r^{(i+1)}\rangle$ next to the present vertex $[l^{(i+1)}]$, as the following
\begin{align*}
          l_{4j+2}^{(i+1)}&=y_{i}l_{4j}^{(i+1)}-\rho_{i-j-2}(u_1,v_1,\ldots,u_{i-1},v_{i-1}),\\
          l_{4j+3}^{(i+1)}&=y_{i}l_{4j+1}^{(i+1)}-\rho_{i-j-3}(v_1,u_2\ldots,v_{i-2},u_{i-1},v_{i-1}),
\end{align*}
respectively.

Since $\Lambda_{k,q}$ is edge-transitive, the girth cycles in $\Lambda_{k,q}$ containing any given edge are determined indeed by Theorems~\ref{lamda34}--\ref{lambda-k3} for a few small $k$'s.
However, it is still not transparent how to count all the girth cycles in these graphs.

For example, let $\Phi$ denote the set of backtrackless walks $[l_1]\langle r_1\rangle[l_2]\langle r_2\rangle[l_3]$ of length 5 in $\Lambda_{3,3}$.
Since $g(\Lambda_{3,3})=8$ and in $\Lambda_{3,3}$ there is a 8-cycle of type $\epsilon$ if and only if (\ref{3v_12eq0}) is valid for some $r,s,t\in\mathbb{F}_3^*$, according to the symmetry of $\Lambda_{3,3}$ we see that any walk
$\Gamma_5=[l_1]\langle r_1\rangle[l_2]\langle r_2\rangle[l_3]\in \Phi$
is a path, namely consisting of distinct vertices, and there is a unique walk
$\Gamma_5'=[l_1]\langle r_1'\rangle[l_2']\langle r_2'\rangle[l_3]\in\Phi$
such that
$$[l_1]\langle r_1\rangle[l_2]\langle r_2\rangle[l_3]\langle r_2'\rangle[l_2']\langle r_1'\rangle$$
is a girth cycle of $\Lambda_{3,3}$. Moreover, such girth cycle is uniquely determined by the walk $\Gamma_5$.
Since the total number of walks in the set $\Phi$ is $3^3\times 3\times 2\times 2\times 2=648$
and each girth cycle of $\Lambda_{3,3}$ contains exact 8 walks in $\Phi$, we see there are $648/8=81$ girth cycles in $\Lambda_{3,3}$.

However, this method is not effective even for $\Lambda_{3,q}$ with $q>3$ since different girth cycles
in such graphs may have a common backtrackless walk of length 5.

\end{document}